\theoremstyle{definition} 
\newtheorem{thm}{Theorem}[section]
\newtheorem{cor}[thm]{Corollary}
\newtheorem{prop}[thm]{Proposition}
\newtheorem{lem}[thm]{Lemma}
\theoremstyle{definition}
\newtheorem{rmk}[thm]{Remark}
\theoremstyle{definition}
\newtheorem*{ax1}{Axiom 1}
\newtheorem*{ax2}{Axiom 2}
\newtheorem*{ax3}{Axiom 3}
\theoremstyle{remark}
\newcommand{\GL}{\mathrm{GL}}
\newcommand{\Tr}{\mathrm{Tr}}
\newcommand{\End}{\mathrm{End}}
\newcommand{\bZ}{\mathbb{Z}}
\newcommand{\bQ}{\mathbb{Q}}
\newcommand{\bP}{\mathbb{P}}
\newcommand{\bA}{\mathbb{A}}
\newcommand{\bC}{\mathbb{C}}
\newcommand{\bF}{\mathbb{F}}
\newcommand{\mr}{\mathrm}
\newcommand{\mc}{\mathcal}
\newcommand{\Spec}{\mathrm{Spec}}
\newcommand{\Sym}{\mathrm{Sym}}
\newcommand{\Alt}{\mathrm{Alt}}
\newcommand{\Mat}{\mathrm{Mat}}
\newcommand{\Fr}{\mathrm{Fr}}
\newcommand{\llb}{\llbracket}
\newcommand{\rrb}{\rrbracket} 
\newcommand{\GrVec}{\textbf{GrVec}}
\newcommand{\eps}{\epsilon}
\newcommand{\ld}{\lambda}
\newcommand{\id}{\mathrm{id}}
\newcommand{\bs}{\boldsymbol}
\newcommand{\ol}{\overline}
\newcommand{\bl}{\bullet}
\newcommand{\ra}{\rightarrow}
\newcommand{\hra}{\hookrightarrow}
\newcommand{\lt}{\left}
\newcommand{\rt}{\right}
\newcommand{\leqs}{\leqslant}
\newcommand{\ot}{\otimes}
\newcommand{\op}{\oplus}
\newcommand{\acts}{\lefttorightarrow}
\newcommand{\bop}{\bigoplus}
\newcommand{\opp}{\text{op}}
\newcommand{\et}{\text{\'et}}
\newcommand{\be}{\begin{enumerate}}
\newcommand{\ee}{\end{enumerate}}
\numberwithin{equation}{section}
\begin{document}

\title[P\'olya enumeration theorems in algebraic geometry]{P\'olya enumeration theorems in algebraic geometry}
\author{Gilyoung Cheong}
\address{Department of Mathematics, University of Michigan, 530 Church Street, Ann Arbor, MI 48109-1043}
\email{gcheong@umich.edu}

\begin{abstract}
We generalize a formula due to Macdonald that relates the singular Betti numbers of $X^{n}/G$ to those of $X$, where $X$ is a compact manifold and $G$ is any subgroup of the symmetric group $S_{n}$ acting on $X^{n}$ by permuting coordinates. Our result is completely axiomatic: in a general setting, given an endomorphism on the cohomology $H^{\bl}(X)$, it explains how we can explicitly relate the Lefschetz series of the induced endomorphism on $H^{\bl}(X^{n})^{G}$ to that of the given endomorphism on $H^{\bl}(X)$ in the presence of the K\"unneth formula with respect to a cup product. For example, when $X$ is a compact manifold, we take the Lefschetz series given by the singular cohomology with rational coefficients. On the other hand, when $X$ is a projective variety over a finite field $\bF_{q}$, we use the $l$-adic \'etale cohomology with a suitable choice of prime number $l$. We also explain how our formula generalizes the P\'olya enumeration theorem, a classical theorem in combinatorics that counts colorings of a graph up to given symmetries, where $X$ is taken to be a finite set of colors. When $X$ is a smooth projective variety over $\bC$, our formula also generalizes a result of Cheah that relates the Hodge numbers of $X^{n}/G$ to those of $X$. We will also see that the generating function for the Lefschetz series of the endomorphisms on $H^{\bl}(X^{n})^{S_{n}}$ is rational, and this generalizes the following facts: 1. the generating function of the Poincar\'e polynomials of symmetric powers of a compact manifold $X$ is rational; 2. the generating function of the Hodge-Deligne polynomials of symmetric powers of a smooth projective variety $X$ over $\bC$ is rational; 3. the zeta series of a projective variety $X$ over $\bF_{q}$ is rational. We also prove analogous rationality results when we replace $S_{n}$ with $A_{n}$, alternating groups.
\end{abstract}

\maketitle

\section{Introduction} 

\subsection{Motivation} Let $X$ be a compact complex manifold of (complex) dimension $d$ and consider the $n$-th symmetric power $\Sym^{n}(X) = X^{n}/S_{n}$ for each $n \in \bZ_{\geq 0}$. One may ask how to compute the singular Betti numbers $h^{0}(\Sym^{n}(X)), h^{1}(\Sym^{n}(X)), \dots$ for various $n$ with respect to those of $X$. In his influential paper \cite{Mac1}, Macdonald settled this question: he proved

$$\sum_{n=0}^{\infty}\chi_{u}(\Sym^{n}(X)) t^{n} = \frac{(1 - ut)^{h^{1}(X)} \cdots (1 - u^{2d-1}t)^{h^{2d-1}(X)}}{(1 - t)^{h^{0}(X)} \cdots (1 - u^{2d}t)^{h^{2d}(X)}},$$

\

where

$$\chi_{u}(Y) := \sum_{i=0}^{\infty}(-u)^{i}h^{i}(Y),$$

\

a power series\footnote{In this paper, we call $\chi_{u}(Y)$ the \textbf{Poincar\'e series} of $Y$ although it is more common to use the terminology for $\chi_{(-u)}(Y)$, the generating function for $h^{i}(Y)$. If $h^{i}(Y) = 0$ for large enough $i$, we have $\chi_{1}(Y) = \chi(Y)$, the Euler characteristic of $Y$.} in $u$ with integral coefficients, defined for any topological space $Y$ with finite singular Betti numbers. Note that the right-hand side of the above identity is rational in $t$.

\

\hspace{3mm} There is an analogous result when $X$ is a projective variety of dimension $d$ over a finite field $\bF_{q}$ due to Grothendieck:

$$Z_{X}(t) = \frac{\det(\id_{H^{1}(X)} - \Fr_{q,1}^{*}t) \cdots \det(\id_{H^{2d-1}(X)} - \Fr_{q,2d-1}^{*}t)}{\det(\id_{H^{0}(X)} - \Fr_{q,0}^{*}t) \cdots \det(\id_{H^{2d}(X)} - \Fr_{q,2d}^{*}t)},$$

where

\

$$Z_{X}(t) = \exp\left(\sum_{r=1}^{\infty}\frac{|X(\bF_{q^{r}})|t^{r}}{r}\right)  = \prod_{x \in |X|}\frac{1}{1 - t^{\deg(x)}},$$

\

is the \textbf{zeta series} of $X$, writing $|X|$ to mean the set of closed points of $X$ in the last expression, which reminds us its similarity to the Euler product of the Riemann zeta function. In the above result, the notation $H^{i}(X)$ now denotes the $i$-th $l$-adic \'etale cohomology 

$$H^{i}_{\et}(X, \bQ_{l}) := H^{i}_{\et}(X_{/\ol{\bF_{q}}}, \bZ_{l}) \ot_{\bZ_{l}} \bQ_{l}$$

\ 

of $X_{/\ol{\bF_{q}}} := X \times_{\Spec(\bF_{q})} \Spec(\ol{\bF_{q}}),$ which is a finite dimensional vector space over the field $\bQ_{l}$ of $l$-adic rational numbers for any fixed prime number $l$ not dividing $q$. We write $\Fr_{q}$, which we call the \textbf{Frobenius} endomorphism on $X$, to mean the map from $X$ to itself given by the identity on the underlying topological space and the $q$-th power map on the structure sheaf $\mathscr{O}_{X}$, giving an endomorphism on $X_{/\ol{\bF_{q}}}$, inducing the $\bQ_{l}$-linear endomorphism $\Fr_{q,i}^{*}$ on $H^{i}(X)$. In particular, this shows that $Z_{X}(t)$ is rational in $t$, which was first shown by Dwork \cite{Dwo}. It is well-known (presumably due to Kapranov \cite{Kap}) that

$$Z_{X}(t) = \sum_{n=0}^{\infty} |\Sym^{n}(X)(\bF_{q})| t^{n},$$

\

so writing

$$\sum_{n=0}^{\infty} |\Sym^{n}(X)(\bF_{q})| t^{n} = \frac{\det(\id_{H^{1}(X)} - \Fr_{q,1}^{*}t) \cdots \det(\id_{H^{2d-1}(X)} - \Fr_{q,2d-1}^{*}t)}{\det(\id_{H^{0}(X)} - \Fr_{q,0}^{*}t) \cdots \det(\id_{H^{2d}(X)} - \Fr_{q,2d}^{*}t)},$$

\

one may visibly find the similarity between Grothendieck's formula and Macdonald's formula. When we take $u = 1$ in Macdonald's formula, we have

$$\sum_{n=0}^{\infty}\chi(\Sym^{n}(X)) t^{n} = \left(\frac{1}{1 - t}\right)^{\chi(X)},$$

\

and by making analogies between taking the Euler characteristic and counting $\bF_{q}$-points, Vakil \cite{Vak} explained how to interpret Grothendieck's formula as the specialization $u = 1$ of Macdonald's formula in the $l$-adic setting. In this paper, we take this analogy one step further by generalizing both Macdonald's formula and Grothendieck's formula.

\

\hspace{3mm} Our main theorem (i.e., Theorem \ref{main}) is too formal to state without providing a concrete consequence:

\begin{thm}\label{showcase} Let $X$ be either a compact complex manifold of dimension $d$ or a projective variety of dimension $d$ over a finite field $\bF_{q}$. Then for any endomorphism $F$ on $X$, we have

$$\sum_{n=0}^{\infty}L_{u}(\Sym^{n}(F)^{*}) t^{n} = \frac{\det(\id_{H^{1}(X)} - F_{1}^{*}ut) \cdots \det(\id_{H^{2d-1}(X)} - F_{2d-1}^{*}u^{2d-1}t)}{\det(\id_{H^{0}(X)} - F_{0}^{*}t) \cdots \det(\id_{H^{2d}(X)} - F_{2d}^{*}u^{2d}t)},$$

\

where

\begin{itemize}
	\item $H^{i}(X)$ is the singular cohomology of $X$ with $\bQ$-coefficients when $X$ is a compact complex manifold,
	\item $H^{i}(X)$ is the \'etale cohomology of $X_{/\ol{\bF_{q}}}$ with $\bQ_{l}$-coefficients when $X$ is a projective variety over $\bF_{q}$ for some prime number $l$,
	\item $\Sym^{n}(F)$ is the endomorphism on $\Sym^{n}(X)$ induced by $F$,
	\item $F_{i}^{*}$ is the induced endomorphism on $H^{i}(X)$ from $F$, and
	\item $L_{u}(F^{*}) := \sum_{i \geq 0}(-u)^{i}\Tr(F_{i}^{*})$.
\end{itemize}
\end{thm}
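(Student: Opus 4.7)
The strategy is to reduce the claim to a purely linear-algebraic identity about graded endomorphisms of $H^{\bl}(X)$. First I would invoke the K\"unneth formula in the relevant cohomology theory to identify $H^{\bl}(X^{n}) \cong H^{\bl}(X)^{\ot n}$ as graded vector spaces, compatibly with the action of $S_{n}$ permuting tensor factors; crucially, the graded-commutativity of the cup product forces this $S_{n}$-action to carry Koszul signs, so a transposition acts as $-1$ on a pair of odd-degree factors. Since $|S_{n}|$ is invertible in $\bQ$ (respectively $\bQ_{l}$), the quotient map $X^{n} \ra \Sym^{n}(X)$ induces an isomorphism $H^{\bl}(\Sym^{n}(X)) \cong H^{\bl}(X^{n})^{S_{n}}$, under which $(\Sym^{n}(F))^{*}$ corresponds to the restriction of $(F^{*})^{\ot n}$. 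Thus it suffices to prove: for a finite-dimensional graded vector space $V = \bop_{i \geq 0} V_{i}$ over a characteristic-zero field with graded endomorphism $\phi = \bop_{i} \phi_{i}$, one has
\[
\sum_{n=0}^{\infty} L_{u}\!\left(\phi^{\ot n}\big|_{(V^{\ot n})^{S_{n}}}\right) t^{n} = \frac{\prod_{i \text{ odd}} \det(\id - \phi_{i} u^{i} t)}{\prod_{i \text{ even}} \det(\id - \phi_{i} u^{i} t)},
\]
where $S_{n}$ acts with Koszul signs and $L_{u}$ weights the trace on each $V_{i}$ by $(-u)^{i}$.

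Next I would carry out this algebraic computation directly. The super $S_{n}$-invariants decompose canonically as
\[
(V^{\ot n})^{S_{n}} \cong \bop_{\substack{(k_{i})_{i \geq 0} \\ \sum_{i} k_{i} = n}} \bigotimes_{i \text{ even}} \Sym^{k_{i}}(V_{i}) \ \ot \bigotimes_{i \text{ odd}} \bigwedge^{k_{i}}(V_{i}),
\]
because the induced action of $S_{k_{i}}$ on each $V_{i}^{\ot k_{i}}$-factor is by permutation when $i$ is even and by the sign representation when $i$ is odd. The generating series then factors over $i$, and applying the classical identities
\[
\sum_{k \geq 0} \tr(\Sym^{k} \phi_{i}) s^{k} = \frac{1}{\det(\id - \phi_{i} s)}, \qquad \sum_{k \geq 0} \tr\!\left(\bigwedge^{k} \phi_{i}\right) s^{k} = \det(\id + \phi_{i} s),
\]
with $s = u^{i} t$ for $i$ even (so that $(-u)^{ik_{i}} = u^{ik_{i}}$) and $s = -u^{i} t$ for $i$ odd (so that $(-u)^{ik_{i}} = (-1)^{k_{i}} u^{ik_{i}}$), one finds that the odd-$i$ factors become $\det(\id - \phi_{i} u^{i} t)$ in the numerator and the even-$i$ factors become $1/\det(\id - \phi_{i} u^{i} t)$ in the denominator. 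This is exactly the claimed product.

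The main obstacle is the geometric input underpinning the reduction in the first paragraph, and this is where the two cases of the theorem diverge. For a compact complex manifold $X$, both the K\"unneth formula for $H^{\bl}(X^{n}; \bQ)$ and the identification $H^{\bl}(X^{n}/S_{n}; \bQ) \cong H^{\bl}(X^{n}; \bQ)^{S_{n}}$ are standard, the latter via the averaging/transfer argument available in characteristic zero. For a projective variety $X$ over $\fq$, one needs the K\"unneth formula in $l$-adic \'etale cohomology for proper schemes and the corresponding identification $H^{\bl}_{\et}(Y/G, \bQ_{l}) \cong H^{\bl}_{\et}(Y, \bQ_{l})^{G}$ for a finite group $G$ acting on a proper $Y$ with $l \nmid |G|$, both of which are classical (SGA 4). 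One must also check that any endomorphism $F$ of $X$ commutes with the permutation $S_{n}$-action on $X^{n}$ so that $(\Sym^{n}(F))^{*}$ is identified with $(F^{*})^{\ot n}$ on invariants; this is immediate from functoriality. Once these geometric inputs are in hand the algebraic identity of the second paragraph closes the proof, and this is precisely the axiomatic structure formalized by Theorem \ref{main}, which moreover allows an arbitrary subgroup $G \leq S_{n}$ in place of $S_{n}$ itself.
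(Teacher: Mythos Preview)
Your proof is correct, and the geometric reduction (K\"unneth, the quotient isomorphism $H^{\bl}(X^{n}/S_{n}) \simeq H^{\bl}(X^{n})^{S_{n}}$, and the identification $(\Sym^{n}F)^{*} = (F^{*})^{\ot n}$ on invariants) is exactly what the paper does. The algebraic core, however, is genuinely different. The paper proves the more general identity
\[
L_{u}\bigl(\phi^{\ot n}|_{(V^{\ot n})^{G}}\bigr) = Z_{G}\bigl(L_{u}(\phi), L_{u^{2}}(\phi^{2}), \dots, L_{u^{n}}(\phi^{n})\bigr)
\]
for any $G \leqs S_{n}$, by first computing $L_{u}(\sigma\,\phi^{\ot n}) = \prod_{j} L_{u^{j}}(\phi^{j})^{m_{j}(\sigma)}$ for each $\sigma \in S_{n}$ (a direct eigenvalue calculation, reducing to diagonalizable $\phi$ via Zariski density) and then averaging via the projector $e_{G}$. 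Theorem~\ref{showcase} is then recovered from the exponential generating function for the cycle indices $Z_{S_{n}}$. Your route instead bypasses the cycle index entirely: you identify $(V^{\ot n})^{S_{n}}$ with the degree-$n$ piece of the super-symmetric algebra $\bigotimes_{i\text{ even}}\Sym^{\bl}(V_{i}) \ot \bigotimes_{i\text{ odd}}\bigwedge^{\bl}(V_{i})$ and read off the generating function from the classical $\Sym$/$\wedge$ trace identities. This is shorter and more elementary for the case $G = S_{n}$, but it does not immediately yield the formula for general $G$ (e.g.\ the alternating-power results in Section~\ref{alt}), which is precisely what the paper's averaging approach buys.
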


\

\hspace{3mm} Indeed, taking $F = \id_{X}$ in the singular setting of Theorem \ref{showcase}, we obtain Macdonald's formula. Taking $F = \Fr_{q}$ in the $l$-adic setting for choosing primes $l \nmid q$ with $u = 1$, we obtain Grothendieck's formula thanks to the Grothendieck-Lefschetz trace formula (e.g., \cite{Mil}, VI, Theorem 13.4), which implies that

$$|\Sym^{n}(X)(\bF_{q})| = L_{1}(\Sym^{n}(\Fr_{q})^{*}),$$

\

noting that $\Sym^{n}(\Fr_{q})$ is equal to the Frobenius endomorphism on $\Sym^{n}(X)$. Note that Theorem \ref{showcase} is more general than the two formulas in either setting, and we still get the rational generating function in $t$. Our main theorem, which we introduce in the next subsection, is much more general than Theorem \ref{showcase}, and yet it is a simple representation-theoretic observation. We hope that experts in various cohomology theories may find our general formulation clear and useful.

\

\begin{rmk} Theorem \ref{showcase} holds more generally, although we do not seek its maximum generality in this paper. For the singular setting, one may take $X$ to be any compact smooth manifold of (real) dimension $2d$ or any finite CW complex such that $h^{i}(X) = 0$ for all $i > 2d$. In the $l$-adic setting, we must require $l > n$ whenever we deal with $H^{i}(\Sym^{n}(X)) = H^{i}_{\et}(\Sym^{n}(X), \bQ_{l})$ because the result depends on the isomorphism $H^{i}(\Sym^{n}(X)) \simeq H^{i}(X^{n})^{S_{n}}$ (\cite{HN}, Proposition 3.2.1) that uses the fact that $l$ does not divide $|S_{n}| = n!$. Moreover, since $H^{i}(X) = H^{i}_{\et}(X, \bQ_{l}) = 0$ for $i > 2d$ (\cite{Mil}, VI, Theorem 1.1), we have

$$H^{i}(\Sym^{n}(X)) \simeq H^{i}(X^{n})^{S_{n}} \simeq \left(\bop_{i_{1} + \cdots + i_{n} = i} H^{i_{1}}(X) \ot \cdots \ot H^{i_{n}}(X) \right)^{S_{n}} = 0$$

\

if $i > 2dn$ with any choice of $l > n$, so each $L_{u}(\Sym^{n}(F)^{*})$ is a polynomial in $u$ for any such $l$. In the $l$-adic setting, one can instead use the compactly supported $l$-adic \'etale cohomology $H^{\bl}_{\et, \mr{c}}(X, \bQ_{l})$ instead, which allows us to consider $X$ to be any quasi-projective variety over $\bF_{q}$. This will be particularly interesting for revisiting a previously known point-counting result over $\bF_{q}$ in Section \ref{pointcount}.
\end{rmk}

\

\hspace{3mm} Later in this paper, we will run the same story, replacing the full symmetric groups $S_{n}$ with their alternating subgroups $A_{n}$. In particular, we will obtain the following analogue of Theorem \ref{showcase}. This will be restated as Theorem \ref{showcase2}, and more concrete consequences of this can be found in Section \ref{alt}:

\begin{thm} Let $X$ be either a compact complex manifold of dimension $d$ or a projective variety of dimension $d$ over a finite field $\bF_{q}$. Then for any endomorphism $F$ on $X$, we have

$$\sum_{n=0}^{\infty}L_{u}(\Alt^{n}(F)^{*}) t^{n} = \prod_{i=0}^{2d} \left( \frac{1}{\det(\id_{H^{i}(X)} - F^{*}_{i}u^{i}t)} \right)^{(-1)^{i}} +  \prod_{i=0}^{2d} \left( \frac{1}{\det(\id_{H^{i}(X)} + F^{*}_{i}u^{i}t)} \right)^{(-1)^{i+1}} - 1 - L_{u}(F^{*}),$$

\

where we used the same notations as in Theorem \ref{showcase} except $\Alt^{n}(F)$, the endomorphism on the $n$-th alternating power $\Alt^{n}(X) = X^{n}/A_{n}$ of $X$ induced by $F$.
\end{thm}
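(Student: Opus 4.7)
The plan is to apply the main theorem (Theorem \ref{main}) with $G = A_n$ and reduce the resulting expression to two pieces: the $S_n$-invariants of $H^{\bl}(X)^{\ot n}$, already summed by Theorem \ref{showcase} to give the first product, and the sign-isotypic summand, which will yield the second product via a P\'olya-type computation. The key observation is that, since $A_n$ has index two in $S_n$, averaging any $A_n$-fixed vector against a fixed $\tau \in S_n \sm A_n$ produces a direct-sum decomposition
\[
W^{A_n} \;=\; W^{S_n} \op W^{\sgn}
\]
for any rational $S_n$-representation $W$ with $n \geq 2$. Applying this to $W = H^{\bl}(X)^{\ot n}$ with its Koszul-signed $S_n$-action coming from the K\"unneth isomorphism, and using that $F^{\ot n}$ is $S_n$-equivariant, we obtain $L_u(\Alt^{n}(F)^{*}) = L_u(\Sym^{n}(F)^{*}) + L_u\!\left(F^{\ot n} \mid (H^{\bl}(X)^{\ot n})^{\sgn}\right)$ for $n \geq 2$, whereas for $n=0$ and $n=1$ both summands on the right already coincide with the left-hand side; this mismatch is precisely what produces the correction term $-1 - L_u(F^*)$ in the theorem.

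\smallskip

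To compute the sign-isotypic generating function, I use the projector $\tfrac{1}{n!}\sum_\sigma \sgn(\sigma)\sigma$, giving
\[
L_u\!\left(F^{\ot n} \mid (H^{\bl}(X)^{\ot n})^{\sgn}\right) \;=\; \frac{1}{n!}\sum_{\sigma \in S_n} \sgn(\sigma)\, L_u(\sigma \circ F^{\ot n}).
\]
The combinatorial heart of Theorem \ref{main} is the cycle-trace identity
\[
L_u(\sigma \circ F^{\ot n}) \;=\; \prod_{k \geq 1} L_{u^k}(F^k)^{c_k}
\]
for $\sigma$ of cycle type $(c_1, c_2, \ldots)$, obtained by decomposing $V^{\ot n}$ along the cycle supports of $\sigma$ and tracking the Koszul signs from the graded K\"unneth isomorphism. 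Combining this with $\sgn(\sigma) = \prod_k (-1)^{(k-1)c_k}$ and the cycle-index exponential formula for $S_n$ yields
\[
\sum_{n \geq 0} L_u\!\left(F^{\ot n} \mid (H^{\bl}(X)^{\ot n})^{\sgn}\right) t^n \;=\; \exp\!\left(\sum_{k \geq 1} (-1)^{k-1}\, L_{u^k}(F^k)\, \frac{t^k}{k}\right).
\]
Expanding $L_{u^k}(F^k) = \sum_i (-u^k)^i \Tr((F_i^*)^k)$, exchanging the order of summation, and invoking the matrix identity $\sum_{k\geq 1} \Tr(A^k)(-y)^k / k = -\log\det(\id + Ay)$ for each $A = F_i^*$, $y = u^i t$, transforms the right-hand side into $\prod_i \det(\id_{H^i(X)} + F_i^* u^i t)^{(-1)^i}$, which is precisely the second product appearing in the theorem.

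\smallskip

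Adding the two generating functions and subtracting the $n = 0, 1$ overcount then yields the claimed identity. The principal obstacle is the cycle-trace identity $L_u(\sigma \circ F^{\ot n}) = \prod_k L_{u^k}(F^k)^{c_k}$: its proof is not deep, but careful Koszul-sign accounting for permutations of graded tensor factors is required. Since this is exactly what is established in the proof of Theorem \ref{main} for the trivial character of $S_n$, the computation transfers with essentially no modification to the sign character, and the remainder of the proof reduces to the purely formal manipulation of exponential generating functions sketched above.
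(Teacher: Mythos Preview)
Your proposal is correct and follows essentially the same route as the paper. The paper packages the step you call ``$W^{A_n}=W^{S_n}\oplus W^{\sgn}$'' as the cycle-index identity $Z_{A_n}(x_1,\dots,x_n)=Z_{S_n}(x_1,\dots,x_n)+Z_{S_n}(x_1,-x_2,\dots,(-1)^{n+1}x_n)$ (its Lemma~\ref{comb1}), which is exactly the combinatorial shadow of your isotypic decomposition together with $\sgn(\sigma)=\prod_k(-1)^{(k-1)c_k}$; from there both arguments feed the cycle-trace identity (the paper's Theorem~\ref{main2}) into the exponential formula and convert the $\exp$ to determinants in the same way, with the $-1-L_u(F^*)$ correction arising for the same $n=0,1$ reason you give.
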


\

\subsection{Main result and its applications}\label{setup} In this subsection, we formulate our main result. Let $\mc{C}$ be a category where any finite products exist. Fix a field $k$, and suppose that we have a functor 

$$H^{\bl} : \mc{C}^{\opp} \ra \GrVec_{k}$$

\

from the opposite category $\mc{C}^{\opp}$ of $\mc{C}$ to the category $\GrVec_{k}$ of $\bZ_{\geq 0}$-graded vector spaces over $k$ whose morphisms are $k$-linear graded maps (of degree $0$). Given any object $X$ in $\mc{C}$, we may write

$$H^{\bl}(X) = \bop_{i=0}^{\infty}H^{i}(X),$$

\

where each $H^{i}(X)$ is a vector space over $k$. Given any morphism $f : X \rightarrow Y$ in $\mc{C}$, the induced $k$-linear map $f^{*} : H^{\bl}(Y) \rightarrow H^{\bl}(X)$ can be decomposed into $f_{i}^{*} : H^{i}(Y) \rightarrow H^{i}(X)$ for each $i \in \bZ_{\geq 0}$ by definition. In addition, we assume the following axioms:

\

\begin{ax1}
Given any object $X$ in $\mc{C}$, we assume that there is a \textbf{cup product}, namely a $k$-bilinear map $\cup : H^{i}(X) \times H^{j}(X) \rightarrow H^{i+j}(X)$ defined for each $i, j \in \bZ_{\geq 0}$ such that

$$a \cup b = (-1)^{ij} b \cup a$$

\

for all $a \in H^{i}(X)$ and $b \in H^{j}(X)$. 
\end{ax1}

\

\begin{ax2}
Assuming Axiom 1, given any objects $X$ and $Y$ in $\mc{C}$, we assume the \textbf{K\"unneth formula}:

$$H^{\bl}(X \times Y) \simeq H^{\bl}(X) \ot_{k} H^{\bl}(Y)$$

\

given by $p_{X}^{*}(a) \cup p_{Y}^{*}(b) \mapsto a \ot b$ for any \textbf{homogeneous} elements $a \in H^{\bl}(X)$ and $b \in H^{\bl}(Y)$, meaning $a \in H^{i}(X)$ and $b \in H^{j}(Y)$ for some $i, j \in \bZ_{\geq 0}$. (In this case, we will write $i = \deg(a)$ and $j = \deg(b)$ and call them the \textbf{degree} of $a$ and that of $b$, respectively, for the rest of this paper.)
\end{ax2}

\

\begin{ax3} Given any object $X$ in $\mc{C}$, the $k$-vector space $H^{i}(X)$ is finite-dimensional.
\end{ax3}

\

\hspace{3mm} The reader may immediately note that Axiom 1 is only meaningful due to Axiom 2 since otherwise one can always give trivial bilinear maps for a cup product of $H^{\bl}(X)$. Note that these two axioms give

$$H^{\bl}(X^{n}) \simeq H^{\bl}(X)^{\ot n}$$

\

defined by

$$p_{1}^{*}(a_{1}) \cup \cdots \cup p_{n}^{*}(a_{n}) \mapsto a_{1} \ot \cdots \ot a_{n},$$

\

for any homogeneous $a_{1}, \dots, a_{n} \in H^{\bl}(X)$, where $p_{1}, \dots, p_{n}$ are the projection maps $X^{n} \ra X$. If $G$ is any subgroup of $S_{n}$, then $G$ acts on $X^{n}$ by permuting coordinates. The induced action of $G$ on $H^{\bl}(X)$ is precisely given by

$$g \cdot (p_{1}^{*}(a_{1}) \cup \cdots \cup p_{n}^{*}(a_{n})) = p_{g(1)}^{*}(a_{1}) \cup \cdots \cup p_{g(n)}^{*}(a_{n}).$$

\

for $g \in G$. If $\phi = \bop_{i=0}^{\infty}\phi_{i} : H^{\bl}(X) \rightarrow H^{\bl}(X)$ is any $k$-linear graded endomorphism, then it induces a $k$-linear graded map $\phi_{X^{n}} : H^{\bl}(X^{n}) \rightarrow H^{\bl}(X^{n})$ given by 

$$p_{1}^{*}(a_{1}) \cup \cdots \cup p_{n}^{*}(a_{n}) \mapsto p_{1}^{*}(\phi(a_{1})) \cup \cdots \cup p_{n}^{*}(\phi(a_{n})).$$

\

This map is compatible with the $G$-action we discussed above, so $\phi$ induces a $k$-linear graded map $\phi_{X^{n}}|_{H^{\bl}(X^{n})^{G}} : H^{\bl}(X^{n})^{G} \rightarrow H^{\bl}(X^{n})^{G}$ on the $G$-invariance parts. Note that if $F : X \ra X$ is an endomorphism in $\mc{C}$, then

\begin{align*}
F^{*}_{X^{n}}(p_{1}^{*}(a_{1}) \cup \cdots \cup p_{n}^{*}(a_{n})) &= p_{1}^{*}(F^{*}(a_{1})) \cup \cdots \cup p_{n}^{*}(F^{*}(a_{n})) \\
&= (F \circ p_{1})^{*}(a_{1}) \cup \cdots \cup (F \circ p_{n})^{*}(a_{n}) \\
&= (p_{1} \circ F^{n})^{*}(a_{1}) \cup \cdots \cup (p_{n} \circ F^{n})^{*}(a_{n}) \\
&= (F^{n})^{*} (p_{1}^{*}(a_{1}) \cup \cdots \cup p_{n}^{*}(a_{n})),
\end{align*}

\

so $F^{*}_{X^{n}} = (F^{n})^{*}$, where $F^{n} : X^{n} \ra X^{n}$ is induced by $F : X \ra X$. Using Axiom 3, we can define the \textbf{Lefschetz series} of $\phi$ as

$$L_{u}(\phi) := \sum_{i=0}^{\infty}(-u)^{i} \Tr(\phi_{i}) \in k \llb u \rrb.$$

\

\hspace{3mm} We are now ready to state our main theorem:

\begin{thm}\label{main} Keeping all the notations above, suppose that $H^{\bl} : \mc{C}^{\opp} \ra \GrVec_{k}$ satisfies Axiom 1, Axiom 2, and Axiom 3. If the characteristic of $k$ does not divide $|G|$, then for any object $X$ of $\mc{C}$, we have

$$L_{u}(\phi_{X^{n}}|_{H^{\bl}(X^{n})^{G}}) = Z_{G}(L_{u}(\phi), L_{u^{2}}(\phi^{2}), \dots, L_{u^{n}}(\phi^{n})),$$

\

where 

$$Z_{G}(x_{1}, \dots, x_{n}) := \frac{1}{|G|}\sum_{g \in G}x_{1}^{m_{1}(g)} \cdots x_{n}^{m_{n}(g)} \in k[x_{1}, \dots, x_{n}]$$

\

with $m_{i}(g)$ the number of $i$-cycles in the cycle decomposition of $g$ in $S_{n}$.
\end{thm}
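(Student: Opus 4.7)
The plan is to reduce $L_{u}(\phi_{X^{n}}|_{H^{\bl}(X^{n})^{G}})$ to a sum of Lefschetz traces of $g \circ \phi^{\ot n}$ over $g \in G$ via the Reynolds averaging projector, and then to evaluate each summand by the cycle decomposition of $g$. Since $\mr{char}(k) \nmid |G|$, the operator $\pi_{G} := \frac{1}{|G|}\sum_{g \in G} g$ is a projection from $H^{\bl}(X^{n})$ onto $H^{\bl}(X^{n})^{G}$. The K\"unneth isomorphism of Axiom 2 identifies $\phi_{X^{n}}$ with $\phi^{\ot n}$ (as made explicit just before the theorem), which is manifestly $S_{n}$-equivariant and hence commutes with $\pi_{G}$. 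Taking graded traces degree by degree then yields
$$L_{u}(\phi_{X^{n}}|_{H^{\bl}(X^{n})^{G}}) = \frac{1}{|G|}\sum_{g \in G} L_{u}(g \circ \phi^{\ot n}),$$
so by the definition of $Z_{G}$ the theorem reduces to showing that, for every $g \in G$,
$$L_{u}(g \circ \phi^{\ot n}) = \prod_{i=1}^{n} L_{u^{i}}(\phi^{i})^{m_{i}(g)}.$$

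To prove this identity, I would fix a homogeneous basis $\{e_{\alpha}\}$ of $H^{\bl}(X)$ and write $\phi(e_{\alpha}) = \sum_{\beta} \phi^{\beta}_{\alpha} e_{\beta}$, noting that $\phi^{\beta}_{\alpha} = 0$ unless $|e_{\alpha}| = |e_{\beta}|$. Under K\"unneth, the $G$-action on $H^{\bl}(X)^{\ot n}$ is a Koszul-signed permutation, so a direct expansion produces the diagonal matrix entry
$$[g \circ \phi^{\ot n}]^{\alpha}_{\alpha} = \epsilon(g;\alpha) \prod_{i=1}^{n} \phi^{\alpha_{g(i)}}_{\alpha_{i}},$$
where $\epsilon(g;\alpha) \in \{\pm 1\}$ is the Koszul sign obtained when reordering $p_{g(1)}^{*}(e_{\alpha_{1}}) \cup \cdots \cup p_{g(n)}^{*}(e_{\alpha_{n}})$ into standard cup-product order. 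Non-vanishing forces $|e_{\alpha_{i}}|$ to be constant along each $g$-orbit, and $\epsilon(g;\alpha)$ splits multiplicatively over disjoint cycles, so the graded trace factors as a product over the cycles of $g$. For a single $k$-cycle whose orbit carries the common degree $e$, summing the non-base indices yields $\Tr(\phi^{k}|_{H^{e}(X)})$ with Koszul sign $(-1)^{(k-1)e}$; then the identity $(-u)^{ke} \cdot (-1)^{(k-1)e} = (-u^{k})^{e}$ converts the cycle's weighted sum into exactly $L_{u^{k}}(\phi^{k})$, and multiplying over cycles establishes the claim.

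The principal obstacle is the sign bookkeeping. One must verify that the Koszul signs inherited from the graded-commutative cup product of Axiom 1 combine with the alternating weights $(-u)^{i}$ in the Lefschetz series to produce precisely the substitution $u \mapsto u^{i}$ in the $i$-th argument of $Z_{G}$, rather than some sign-twisted variant. The clean form of the answer hinges on the uniform parity identity $ke + (k-1)e \equiv e \pmod{2}$, which makes the $(-1)^{(k-1)e}$ Koszul contribution from a $k$-cycle cancel against the $(-1)^{ke}$ factor in $(-u)^{ke}$. I would verify this first for a single $k$-cycle and then confirm that Koszul signs for disjoint permutations factor multiplicatively, after which everything remaining is a bookkeeping consequence of Axiom 2 and the definition of $Z_{G}$.
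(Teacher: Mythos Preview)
Your proposal is correct and follows the same overall architecture as the paper: reduce $L_{u}(\phi_{X^{n}}|_{H^{\bl}(X^{n})^{G}})$ to the average $\frac{1}{|G|}\sum_{g}L_{u}(g\phi^{\ot n})$ via the Reynolds projector, then evaluate each summand using the cycle decomposition of $g$ together with the Koszul sign rule. Where you diverge is in the second step. The paper first passes to the algebraic closure of $k$, invokes a Zariski density argument to reduce to $\phi$ with distinct eigenvalues, diagonalizes, and then uses a conjugacy reduction to assume the cycles of $g$ occupy consecutive blocks of $\{1,\dots,n\}$ before computing on an eigenbasis. Your direct expansion in matrix entries of an arbitrary homogeneous basis bypasses the base change and the density argument entirely, which is genuinely more elementary; the paper's route buys nothing extra here.

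One point in your sketch deserves to be made explicit. When you assert that $\eps(g;\alpha)$ splits multiplicatively over disjoint cycles, the identity $Q_{\sigma\tau}=Q_{\sigma}+Q_{\tau}$ for disjoint $\sigma,\tau$ gives this formally, but each $Q_{c}$ for a cycle $c$ still contains cross-terms $x_{i}x_{j}$ with $j$ outside the support of $c$. You need to note either that these cross-terms contribute an even integer once degrees are constant on $g$-orbits (the number of $i$ in the support with $i<j$, $c(i)>j$ equals the number with $i>j$, $c(i)<j$, since $c$ permutes its support), or else use the conjugacy reduction $\Tr(g\phi^{\ot n})_{r}=\Tr(\omega g\omega^{-1}\phi^{\ot n})_{r}$ to place the cycles on consecutive blocks, after which no cross-terms arise. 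The paper takes the second option; either one completes your argument, and your single-cycle sign $(-1)^{(k-1)e}$ is then exactly what the paper computes via its Lemma on $Q_{\sigma}$.
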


\

\hspace{3mm} In combinatorics, the polynomial $Z_{G}(x_{1}, \dots, x_{n})$, often defined over $\bQ$, is called the \textbf{cycle index} of $G$ in $S_{n}$. Much is known about the cycle indices. For instance (e.g., from p.20 of \cite{Sta}), we have

$$\sum_{n=0}^{\infty}Z_{S_{n}}(x_{1}, \dots, x_{n}) t^{n} = \exp\left( \sum_{r=1}^{\infty}\frac{x_{r}t^{r}}{r} \right).$$

\

This immediately provides the following:

\begin{cor}\label{S_{n}} Assume the same hypotheses as in Theorem \ref{main}. If $\dim_{k}(H^{\bl}(X))$ is finite so that $H^{i}(X) = 0$ for all $i > 2d$ for some $d$, then

$$\sum_{n=0}^{\infty}L_{u}(\phi_{X^{n}}|_{H^{\bl}(X^{n})^{S_{n}}}) t^{n} = \frac{\det(\id_{H^{1}(X)} - \phi_{1}ut) \cdots \det(\id_{H^{2d-1}(X)} - \phi_{2d-1} u^{2d-1}t)}{\det(\id_{H^{0}(X)} - \phi_{0}t) \cdots \det(\id_{H^{2d}(X)} - \phi_{2d}u^{2d}t)}.$$
\end{cor}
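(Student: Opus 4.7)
The plan is to apply Theorem \ref{main} termwise with $G = S_{n}$, assemble the resulting identities into a single generating series in $t$, and collapse the right-hand sum using the cycle-index generating function for symmetric groups quoted just before the statement. A classical identity expressing logarithms of characteristic polynomials in terms of trace power sums then converts the resulting exponential into the claimed finite product of determinants.

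Concretely, for each $n \geq 0$, Theorem \ref{main} yields
$$L_{u}(\phi_{X^{n}}|_{H^{\bl}(X^{n})^{S_{n}}}) = Z_{S_{n}}(L_{u}(\phi), L_{u^{2}}(\phi^{2}), \dots, L_{u^{n}}(\phi^{n})).$$
Multiplying by $t^{n}$, summing over $n$, and substituting $x_{r} = L_{u^{r}}(\phi^{r})$ into the identity $\sum_{n \geq 0} Z_{S_{n}}(x_{1}, \dots, x_{n}) t^{n} = \exp\left(\sum_{r \geq 1} x_{r} t^{r}/r\right)$ recalled just above the statement gives
$$\sum_{n=0}^{\infty} L_{u}(\phi_{X^{n}}|_{H^{\bl}(X^{n})^{S_{n}}}) t^{n} = \exp\left(\sum_{r=1}^{\infty} \frac{L_{u^{r}}(\phi^{r}) \, t^{r}}{r}\right).$$
Using $H^{i}(X) = 0$ for $i > 2d$, I would next expand $L_{u^{r}}(\phi^{r}) = \sum_{i=0}^{2d} (-1)^{i} u^{ri} \Tr(\phi_{i}^{r})$ and interchange the now-finite $i$-sum with the $r$-sum in the exponent.

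The last step is the standard formal identity $\sum_{r \geq 1} \Tr(A^{r}) s^{r}/r = -\log \det(\id - A s)$ valid for any endomorphism $A$ of a finite-dimensional vector space, immediate after upper-triangularizing $A$ over an algebraic closure of $k$ and using $-\log(1 - \lambda s) = \sum_{r \geq 1} \lambda^{r} s^{r}/r$ for each eigenvalue $\lambda$. Applied with $A = \phi_{i}$ and $s = u^{i} t$, this rewrites the exponent as $\sum_{i=0}^{2d} (-1)^{i+1} \log \det(\id_{H^{i}(X)} - \phi_{i} u^{i} t)$; exponentiating and grouping factors by parity of $i$ produces exactly the displayed ratio, with odd $i$ contributing to the numerator and even $i$ to the denominator. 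I do not foresee a genuine obstacle here: every manipulation is a routine operation on formal power series (each inner sum has order $\geq 1$ in $t$, so $\log$ and $\exp$ are well-defined termwise), and all the substantive content already sits in Theorem \ref{main} and in the cycle-index identity.
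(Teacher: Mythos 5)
Your proposal is correct and follows essentially the same route as the paper's proof: apply Theorem \ref{main} for each $n$, collapse the sum via the exponential generating function for the cycle indices $Z_{S_{n}}$, and finish with the identity $\exp\left(\sum_{r\geq 1}\Tr(A^{r})s^{r}/r\right) = \det(\id - sA)^{-1}$ applied to $A = \phi_{i}$, $s = u^{i}t$. The only cosmetic difference is that the paper first passes to an algebraic closure of $k$ to treat $u$ as a scalar, whereas you work formally throughout; both are fine.
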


\begin{proof} Both sides are invariant under taking any field extension of $k$, so we may assume that $k$ is algebraically closed. In particular, the field $k$ we work with is now infinite, so we may assume that $u$ is an element of $k$. By Theorem \ref{main}, We have

\begin{align*}
\sum_{n=0}^{\infty}L_{u}(\phi_{X^{n}}|_{H^{\bl}(X^{n})^{S_{n}}}) t^{n} &= \sum_{n=0}^{\infty}Z_{S_{n}}(L_{u}(\phi), L_{u^{2}}(\phi^{2}), \dots, L_{u^{n}}(\phi^{n})) t^{n} \\
&= \exp\left( \sum_{r=1}^{\infty}\frac{L_{u^{r}}(\phi^{r})t^{r}}{r} \right) \\
&= \exp\left( \sum_{r=1}^{\infty}\sum_{i=0}^{2d}\frac{(-u^{r})^{i}\Tr(\phi_{i}^{r})t^{r}}{r} \right) \\
&= \prod_{i=0}^{2d}\exp\left( \sum_{r=1}^{\infty}\frac{(-1)^{i}\Tr((\phi_{i}u^{i})^{r})t^{r}}{r} \right) \\
&= \prod_{i=0}^{2d}\exp\left( \sum_{r=1}^{\infty}\frac{\Tr((\phi_{i}u^{i})^{r})t^{r}}{r} \right)^{(-1)^{i}}.
\end{align*}

\

Hence, the result follows from the fact that 

$$\exp\left( \sum_{r=1}^{\infty}\frac{\Tr(A^{r})t^{r}}{r} \right) = \frac{1}{\det(\id - tA)}$$

\

for any linear map $A$ on a finite dimensional vector space $V$ (e.g., see \cite{Mus}, Lemma 4.12).
\end{proof}

\

\hspace{3mm} Theorem \ref{showcase} is an immediate corollary of Corollary \ref{S_{n}}. This is because, in either the singular or the $l$-adic setting, we have the quotient map $X^{n} \rightarrow X^{n}/S_{n} = \Sym^{n}(X)$ either in the category of topological spaces or the category of varieties over $\bF_{q}$, and the map induces an isomorphism

$$H^{\bl}(\Sym^{n}(X)) \simeq H^{\bl}(X^{n})^{S_{n}}$$

\

in either setting, whose proofs can be found in \cite{Mac2} and \cite{HN} (Proposition 3.2.1) as long as we choose $l > n$ in the $l$-adic setting.

\

\hspace{3mm} Over the course of proving Theorem \ref{main}, we will show that

$$L_{u}(g \phi_{X^{n}}) = L_{u}(\phi)^{m_{1}(g)}L_{u^{2}}(\phi^{2})^{m_{2}(g)} \cdots L_{u^{n}}(\phi^{n})^{m_{n}(g)} \in k\llb u \rrb$$

\

without any assumption on the characteristic of the base field $k$ for the cohomology. When $\dim_{k}(H^{\bl}(X))$ is finite (i.e., $H^{i}(X) = 0$ for $i \gg 0$), taking $u = 1$ and $\phi = \id_{H^{\bl}(X)}$ in the above identity gives us

$$\sum_{i=0}^{\infty}(-1)^{i}\Tr(g \acts H^{i}(X^{n})) = \chi(X)^{m_{1}(g) + 2m_{2}(g) + \cdots + nm_{n}(g)} = \chi(X)^{n}$$

\

for any $g \in G$, where $\chi(X) = \sum_{i \geq 0}(-1)^{i}\dim_{k}(H^{i}(X))$. In the $l$-adic setting, the expression on the left-hand side is generally known to be an integer independent to the choice of $l$, due to Deligne and Lusztig (\cite{DL}, Proposition 3.3) for $l \nmid q$. Illusie and Zheng also provides a similar result (\cite{IZ}, Corollary 2.6). If $X$ is a smooth projective variety over $\bF_{q}$, we know that $\chi(X)$ is independent to the choice of $l$ as a consequence of a theorem of Deligne which states that the size of the eigenvalues of the Frobenius action on the $i$-th $l$-adic \'etale cohomology of $X$ is $q^{i/2}$ (e.g., see \cite{Mil}, VI, Remark 12.5.(b)). It is worth to note that in our case, the number on the left-hand side is also independent of the choice of $g \in G$, which must be due to the simplicity of the group action we are dealing with. Answering this question for our specific case does not require more than merely applying the proof of Macdonald's formula in the $l$-adic setting on top of Deligne's result.

\

\hspace{3mm} If $X$ is a smooth projective variety over $\bC$ with dimension $d$, then $i$-th singular cohomlogy $H^{i}(X)$ of (the analytification of) $X$ with $\bC$-coefficients has the Hodge decomposition:

$$H^{i}(X) = \bop_{p + q = i}H^{p,q}(X).$$

\

In general, the variety $\Sym^{n}(X)$ is not smooth, but its singular cohomology still admits the Hodge decomposition as $H^{i}(\Sym^{n}(X)) \hra H^{i}(X^{n})$ so that we can use the Hodge decomposition of $H^{i}(X^{n})$. In particular, we see $H^{i}(\Sym^{n}(X))$ has a \textbf{pure Hodge structure} of \textbf{weight} $i$, in the sense of Deligne's mixed Hodge structure introduced in \cite{Del}, although we do not need this language for the sake of this paper. In this setting, if we take

$$\phi = \bop_{i \geq 0}\bop_{p+q = i}x^{p}y^{q}\id_{H^{p,q}(X)}$$

\

for fixed $x, y \in \bC$ in Corollary \ref{S_{n}}, using $H^{\bl}(\Sym^{n}(X)) \simeq H^{\bl}(X^{n})^{S_{n}}$ (over $\bC$), we have

$$\sum_{n=0}^{\infty}\sum_{i=0}^{2d}\sum_{p+q=i}h^{p,q}(\Sym^{n}(X))x^{p}y^{q}(-u)^{i} t^{n} = \prod_{i = 0}^{2d} \prod_{p + q = i} \left( \frac{1}{1 - x^{p}y^{q}u^{i} t} \right)^{(-1)^{i}h^{p,q}(X)},$$

\

where 

$$h^{p,q}(\Sym^{n}(X)) := \dim_{\bC}(H^{i}(\Sym^{n}(X)) \cap H^{p,q}(X^{n})),$$

\

whenever $p + q = i$. Since $x, y$ are arbitrary, we may treat them as formal variables, and this identity is a result of Cheah (p.119 of \cite{Che}). When we take $u = 1$, this shows that the generating function for the Hodge-Deligne polynomials of $\Sym^{n}(X)$ is rational in $t$. This generating function is hence analogous to the zeta series of a projective variety over a finite field. Moreover, the two settings for the specialization $u = 1$ can be studied at once using the motivic zeta series of a variety defined over the Grothendieck ring of varieties as explained in \cite{Vak} and \cite{VW}. However, it is unclear whether the Grothendieck ring is the right general setting to study these phenomena when we do not specialize the variable $u$.

\

\subsection{P\'olya enumeration theorems}\label{PolyaAG} Let $G$ be a subgroup of $S_{n}$. Keeping in mind that $H^{\bl}(X^{n}/G) \simeq H^{\bl}(X^{n})^{G}$ from \cite{Mac2} and Proposition 3.2.1 of \cite{HN}, if we directly apply Theorem \ref{main} without specifying $G$ to be the full symmetric group $S_{n}$, we have

$$\chi_{u}(X^{n}/G)= Z_{G}(\chi_{u}(X), \chi_{u^{2}}(X), \dots, \chi_{u^{n}}(X))$$

\

and

$$\chi_{u}(X^{n}/G, x, y) = Z_{G}(\chi_{u}(X, x, y), \chi_{u^{2}}(X, x^{2}, y^{2}), \dots, \chi_{u^{n}}(X, x^{n}, y^{n}))$$

\

where

$$\chi_{u}(Z, x, y) := \sum_{i=0}^{\infty}\sum_{p+q = i}h^{p,q}(Z)x^{p}y^{q}(-u)^{i},$$

\

and these are also results of Macdonald \cite{Mac1} and Cheah \cite{Che}. Taking $u = 1$ in the $l$-adic setting, Theorem \ref{main} also implies the following result regrding the $\bF_{q}$-point counting:

$$|(X^{n}/G)(\bF_{q})| = Z_{G}(|X(\bF_{q})|, |X(\bF_{q^{2}})|, \dots, |X(\bF_{q^{n}})|).$$

\

One can even take $X$ to be a finite set. Then giving $X$ the discrete topology, we have $\chi(X) = |X|$, and thus

$$|X^{n}/G| = Z_{G}(|X|, |X|, \dots, |X|) = \frac{1}{|G|}\sum_{g \in G}|X|^{m(g)},$$

\

where $m(g)$ is the number of cycles in the cycle decomposition of $g$ in $S_{n}$. Since $|(X^{n})^{g}| = |X|^{m(g)}$, where $(X^{n})^{g}$ is the set of elements in $X^{n}$ fixed by $g$, the last statement also follows from Burnside's lemma. This statement is a special case of the \textbf{P\'olya enumeration theorem} in combinatorics, which we discuss in Section \ref{PolyaComb}, so it makes sense to use the same name for the preceding results including Theorem \ref{main}, even though they seem to be in the realm of algebraic geometry. This was the rationale behind the title of this paper.

\

\subsection{Structure of the rest of the paper} In Section \ref{PolyaComb}, we explain a more general version of P\'olya enumeration theorem in combinatorics and show how Theorem \ref{main} generalizes this as well. In Section \ref{mainproof}, we give a proof of Theorem \ref{main}, the main theorem of this paper. In Section \ref{alt}, we show how to compute various cohomological information about the alternating powers $\Alt^{n}(X) = X^{n}/A_{n}$ of given $X$ analogous to computations for the symmetric powers $\Sym^{n}(X) = X^{n}/S_{n}$ in the introduction. In Section \ref{pointcount}, we point out that our formula for $|(X^{n}/G)(\bF_{q})|$ holds even when $X$ is a quasi-projective variety over $\bF_{q}$. We give an example to explain why this generalization is interesting.

\

\subsection{Acknowledgments} The author thanks Yifeng Huang, Mircea Musta\c{t}\u{a}, and John Stembridge for indispensable discussions about the main ideas of this paper. When it comes to dealing with \'etale cohomology, the author is thankful for conversations with Bhargav Bhatt, Shizhang Li, Emanuel Reinecke, and Ravi Vakil. For suggesting further directions after this work, the author would like to thank Bill Fulton, Kiran Kedlaya, Luc Illusie, Minhyong Kim, Yinan Nancy Wang, and Mike Zieve. Finally, the author is grateful for comments from Daniel Litt and Weizhe Zheng about the previous draft of this paper.

\

\section{P\'olya enumeration theorem in combinatorics} \label{PolyaComb}

\hspace{3mm} Let $X = \{x_{1}, \dots, x_{r}\}$ be a finite set of colors. A common problem in combinatorics is to count the number of ways to color $n$ vertices (which we write as $1, 2, \dots, n$) of a graph with colors in $X$. The graph may have symmetries, so we want to count the colorings of $n$ vertices modulo the action of the group $G$ of symmetries of the graph. This group $G$ is a subgroup of $S_{n}$, and each coloring corresponds to an element of $X^{n}/G$, which is of the form $\bs{x} = [x_{1}]^{e_{n}} \cdots [x_{r}]^{e_{n}}$, where $e_{1}, \dots, e_{r} \in \bZ_{\geq 0}$ such that $e_{1} + \cdots + e_{r} = n$ and $x_{1}, \dots, x_{r} \in X$. We shall write $e_{i} := e_{i}(\bs{x})$ to emphasize the dependence to $\bs{x}$. Given any $(k_{1}, \dots, k_{r}) \in (\bZ_{\geq 0})^{r}$ such that $\sum_{i=1}^{r} k_{i} = n$, we may write $N_{(k_{1}, \dots, k_{r})}$ to mean the number of $\bs{x} \in X^{n}/G$ such that $e_{i}(\bs{x}) = k_{i}$ for all $1 \leq i \leq r$. We note that our counting problem is equivalent to computing the following degree $n$ homogeneous polynomial:

\

$$P_{X^{n}/G}(\bs{t}) = P_{X^{n}/G}(t_{1}, \dots, t_{r}) := \sum_{\substack{(k_{1}, \dots, k_{r}) \in (\bZ_{\geq 0})^{r}, \\ k_{1} + \cdots + k_{r} = n}} N_{(k_{1}, \dots, k_{r})}t_{1}^{k_{1}} \cdots t_{r}^{k_{r}} \in \bZ[t_{1}, \dots, t_{n}].$$

\

A classical theorem of Redfield \cite{Red}, which is also independently found by P\'olya \cite{Pol}, computes the polynomial $P_{X^{n}/G}(\bs{x})$ in terms of the subgroup $G \leqs S_{n}$. This theorem is often called \textbf{P\'olya enumeration theorem}:

\

\begin{prop}[P\'olya enumeration]\label{Polya} Given the notations above, we have

$$P_{X^{n}/G}(\bs{t}) = Z_{G}(\bs{t}, \bs{t}^{2}, \dots, \bs{t}^{n}),$$

\

where $\bs{t}^{j} := t_{1}^{j} + \cdots + t_{r}^{j}$.
\end{prop}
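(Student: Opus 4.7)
The plan is to derive Proposition~\ref{Polya} as a direct application of Theorem~\ref{main} by specializing the axiomatic framework to a combinatorial setting. I take $\mc{C} = \FSet$, the category of finite sets, and the base field $k = \bQ(t_{1}, \ldots, t_{r})$, viewing $t_{1}, \ldots, t_{r}$ as formal variables. Define $H^{\bl} : \FSet^{\opp} \to \GrVec_{k}$ to be concentrated in degree $0$ by setting $H^{0}(Y) := k^{Y}$, the space of $k$-valued functions on $Y$, with a morphism $f\colon Y \to Z$ inducing the pullback $f^{*}\colon k^{Z} \to k^{Y}$. The cup product is taken to be pointwise multiplication of functions.

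With this setup, Axiom~1 is automatic since $H^{\bl}$ is concentrated in degree zero. Axiom~2 is the standard identification $k^{Y \times Z} \simeq k^{Y} \ot_{k} k^{Z}$ sending $p_{Y}^{*}(a) \cup p_{Z}^{*}(b)$ to $a \ot b$. Axiom~3 holds because $\dim_{k} k^{Y} = |Y|$. Since $\ch(k) = 0$, the characteristic hypothesis of Theorem~\ref{main} is met for every finite group $G$. Next, take $X = \{x_{1}, \ldots, x_{r}\}$ and define the graded endomorphism $\phi\colon H^{0}(X) \to H^{0}(X)$ diagonally by $\phi(\delta_{x_{i}}) = t_{i} \delta_{x_{i}}$, so that $\phi^{j}$ is diagonal with eigenvalues $t_{1}^{j}, \ldots, t_{r}^{j}$ and hence $L_{u^{j}}(\phi^{j}) = t_{1}^{j} + \cdots + t_{r}^{j} = \bs{t}^{j}$.

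It remains to identify the left-hand side of Theorem~\ref{main} with $P_{X^{n}/G}(\bs{t})$. Under the iterated K\"unneth isomorphism $H^{0}(X^{n}) \simeq k^{X^{n}}$, the endomorphism $\phi_{X^{n}}$ becomes the diagonal operator whose eigenvalue on the basis vector $\delta_{\bs{x}}$ indexed by $\bs{x} = (x_{i_{1}}, \ldots, x_{i_{n}})$ is $w(\bs{x}) := t_{i_{1}} \cdots t_{i_{n}}$. Since $w$ is constant on $G$-orbits, the orbit sums $\sum_{\bs{x}' \in G \cdot \bs{x}} \delta_{\bs{x}'}$ furnish an eigenbasis of the invariants $(k^{X^{n}})^{G}$, with eigenvalue $w(\bs{x})$ depending only on the orbit $[\bs{x}]$. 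Therefore $L_{u}(\phi_{X^{n}}|_{(k^{X^{n}})^{G}}) = \sum_{[\bs{x}] \in X^{n}/G} w(\bs{x}) = P_{X^{n}/G}(\bs{t})$, and applying Theorem~\ref{main} delivers the desired identity. The only point requiring care is verifying that the orbit sums do span the invariants; this follows from the fact that $G$ acts on the chosen basis of $k^{X^{n}}$ by permutation, so no serious obstacle arises.
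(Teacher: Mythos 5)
Your proof is correct and is essentially the paper's own argument: both derive the proposition by applying Theorem \ref{main} to the finite set $X$ with cohomology concentrated in degree $0$ and $\phi$ diagonal with eigenvalues $t_{1},\dots,t_{r}$, so that $L_{u^{j}}(\phi^{j})=\bs{t}^{j}$ and the trace on the $G$-invariants, computed in the eigenbasis of orbit sums, equals $P_{X^{n}/G}(\bs{t})$. Your version is slightly more careful than the paper's (working over $k=\bQ(t_{1},\dots,t_{r})$ and with the contravariant function-space model $k^{Y}$ rather than the informal $\bQ X$), but the route is the same.
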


\

\hspace{3mm} Our main theorem, Theorem \ref{main}, generalizes this classical result. Namely, we may consider $X = \{x_{1}, \dots, x_{r}\}$ as a topological space with the discrete topology and $\phi$ the diagonal matrix on the singular cohomology

$$H^{\bl}(X) = H^{0}(X) = \bQ x_{1} \op \cdots \op \bQ x_{r} = \bQ X$$

\

whose entries are given by $t_{1}, \dots, t_{r}$. We have $(\bQ X^{n})^{G} \simeq \bQ X^{n}/G$ given by $(x_{1}, \dots, x_{n}) \mapsto [x_{1}, \dots, x_{n}]$, whose inverse is given by

$$[x_{1}, \dots, x_{n}] \mapsto \frac{1}{|G|}\sum_{g \in G}(x_{g(1)}, \dots, x_{g(n)}).$$

\

Thus, we have

$$H^{\bl}(X^{n}/G) = H^{0}(X^{n}/G) = \bQ X^{n}/G \simeq (\bQ X^{n})^{G} = H^{0}(X^{n})^{G} = H^{\bl}(X^{n})^{G},$$

\

and the induced endomorphism $\phi_{X^{n}}$ satisfies $\phi_{X^{n}} : (x_{i_{1}}, \dots, x_{i_{n}}) \mapsto t_{i_{1}} \cdots t_{i_{n}}(x_{i_{1}}, \dots, x_{i_{n}}),$ so on $\bQ X^{n}/G$, it satisfies

$$[x_{1}]^{e_{1}} \cdots [x_{r}]^{e_{r}} \mapsto t_{1}^{e_{1}} \cdots t_{r}^{e_{r}}[x_{1}]^{e_{1}} \cdots [x_{r}]^{e_{r}}.$$

\

Therefore, Theorem \ref{main} with $u = 1$ implies Proposition \ref{Polya}. That is, the classical P\'olya enumeration is a special case of Theorem \ref{main}, which deals with more than degree $0$ piece of the cohomology with more diverse choices for $X$.

\

\section{Proof of main theorem} \label{mainproof}

\hspace{3mm} In this section, we prove our main theorem, Theorem \ref{main}. We will first prove a statement about a particular permutation representation on the $n$-fold tensor product of a graded vector space and then apply it to prove Theorem \ref{main}. The representation we work with is not going to be the usual permutation representation on the pure tensors, as it will involve a sign depending on the grading. The reason is that for any $G \leqs S_{n}$, we are interested in the $G$-action on

$$H^{\bl}(X^{n}) \simeq H^{\bl}(X)^{\ot n}$$

\

given by

$$g \cdot (p_{1}^{*}(a_{1}) \cup \cdots \cup p_{n}^{*}(a_{n})) = p_{g(1)}^{*}(a_{1}) \cup \cdots \cup p_{g(n)}^{*}(a_{n}),$$

\

where $a_{1}, \dots, a_{n} \in H^{\bl}(X)$ are homogeneous elements and $g \in G$, where $p_{1}, \dots, p_{n} : X^{n} \ra X$ are projection maps. For instance, if $g = (1 \ 2)$, the transposition switching $1$ and $2$, then the corresponding action of $g$ on $H^{\bl}(X)^{\ot n}$ is given by

$$g \cdot (a_{1} \ot \cdots \ot a_{n}) = (-1)^{\deg(a_{1})\deg(a_{2})}a_{2} \ot a_{1} \ot a_{3} \ot \cdots \ot a_{n},$$

\

which is not equal to $a_{2} \ot a_{1} \ot a_{3} \cdots \ot a_{n}$ unless one of $a_{1}$ or $a_{2}$ has even degree.

\

\hspace{3mm} In general, one may check that

$$g \cdot (a_{1} \ot \cdots \ot a_{n}) = (-1)^{Q_{g}(\deg(a_{1}), \cdots, \deg(a_{n}))} a_{g^{-1}(1)} \ot \cdots \ot a_{g^{-1}(n)},$$

\

where $Q_{g}(x_{1}, \dots, x_{n}) = \sum_{1 \leq i < j \leq n}\eps_{ij}(g) x_{i}x_{j} \in \bZ[x_{1}, \dots, x_{n}]$ is defined by

$$\eps_{ij}(g) := \left\{
	\begin{array}{ll}
	1 & \mbox{if } g(i) > g(j) \mbox{ and}  \\
	0 & \mbox{if } g(i) < g(j),
	\end{array}\right.$$

\

is the $g$-action on $H^{\bl}(X)^{\ot n}$ that is compatible with the $g$-action on $H^{\bl}(X^{n})$. This is the most crucial observation in Macdonald's work \cite{Mac1}, which we fully use for the proof of Theorem \ref{main}.

\

\subsection{General set-up} Throughout this section we fix a ground field $k$. Let $V = \bop_{i \geq 0}V_{i}$ be a graded vector space over $k$. Given $n \in \bZ_{\geq 0}$, consider the $n$-fold tensor product $V^{\ot n}$ of $V$ over $k$, where $V^{\ot 0} = k$. We have 

$$V^{\ot n} = \bop_{r \geq 0} (V^{\ot n})_{r},$$

\

where 

$$(V^{\ot n})_{r} = \bop_{i_{1} + \cdots + i_{n} = r} V_{i_{1}} \ot \cdots \ot V_{i_{n}}.$$

\

This makes $V^{\ot n}$ a graded vector space over $k$. Given any subgroup $G \leqs S_{n}$, we define the action of $G$ on $V^{\ot n}$ as follows:

$$g \cdot (v_{1} \ot \cdots \ot v_{n}) = (-1)^{Q_{g}(\deg(v_{1}), \cdots, \deg(v_{n}))} v_{g^{-1}(1)} \ot \cdots \ot v_{g^{-1}(n)},$$

\

for homogeneous $v_{1}, \dots, v_{n} \in V$ (i.e., $v_{i} \in V_{\deg(v_{i})}$), where 

$$Q_{g}(x_{1}, \dots, x_{n}) = \sum_{1 \leq i < j \leq n}\eps_{ij}(g) x_{i}x_{j} \in \bZ[x_{1}, \dots, x_{n}]$$ 

\

is defined by

$$\eps_{ij}(g) := \left\{
	\begin{array}{ll}
	1 & \mbox{if } g(i) > g(j) \mbox{ and}  \\
	0 & \mbox{if } g(i) < g(j).
	\end{array}\right.$$

\

\hspace{3mm} One may also introduce a formal cup product with the K\"unneth formula in this linear algebraic setting. That is, given any homogeneous $v, w \in V$, we define the \textbf{K\"unneth labeling} of the pure tensor $v \ot w \in V^{\ot 2}$ as the following symbol: 

$$p_{1}(v) \cup p_{2}(w) := v \ot w.$$

\

This notation does not mean anything new yet, but we also define 

$$p_{2}(w) \cup p_{1}(v) := (-1)^{\deg(v)\deg(w)} v \ot w$$

\

so that 

$$p_{2}(w) \cup p_{1}(v) = (-1)^{\deg(v)\deg(w)} p_{1}(v) \cup p_{2}(w).$$

\

On $V^{\ot n}$, the K\"unneth labeling can be recursively extend. That is, we have 

$$p_{1}(v_{1}) \cup \cdots \cup p_{n}(v_{n}) = v_{1} \ot \cdots \ot v_{n},$$

\

given homogeneous elements $v_{1}, \dots, v_{n} \in V$, and for $1 \leq i < n$, we require that swapping $p_{i}(v_{i})$ with $p_{i+1}(v_{i+1})$ must introduce the sign $(-1)^{\deg(v_{i})\deg(v_{i+1})}$. In particular, we have

$$p_{1}(v_{1}) \cup \cdots \cup p_{i+1}(v_{i+1}) \cup p_{i}(v_{i}) \cup \cdots \cup p_{n}(v_{n}) = (-1)^{\deg(v_{i})\deg(v_{i+1})}v_{1} \ot \cdots \ot v_{n},$$

\

and applying this sign rule multiple times is also allowed. One upshot is that we have

$$g \cdot (v_{1} \ot \cdots \ot v_{n}) = p_{g(1)}(v_{1}) \cup \cdots \cup p_{g(n)}(v_{n}),$$

\

where the left-hand side is defined as above, and it is easier to check some formal properties on the right-hand side. For instance, given any $\sigma, \tau \in G$, we have

\begin{align*}
\sigma(\tau(p_{1}(v_{1}) \cup \cdots \cup p_{n}(v_{n}))) &:= p_{\sigma(\tau(1))}(v_{1}) \cup \cdots \cup p_{\sigma(\tau(n))}(v_{n}) \\
&= (\sigma\tau)(p_{1}(v_{1}) \cup \cdots \cup \phi_{n}(v_{n})),
\end{align*}

\

so we have defined a set-theoretic $G$-action on a $k$-linear basis of $V^{\ot n},$ which gives rise to a $k$-linear action of $G$ on $V^{\ot n}$. We call the corresponding $k$-linear $G$-representation $G \ra \GL_{k}(V^{\ot n})$ the \textbf{K\"unneth representation} of $G$ on $V^{\ot n}$. It is important to note that the K\"unneth representation respects the grading of $V^{\ot n}$. That is, it can be thought of $G \ra \GL_{k}((V^{\ot n})_{r})$ for each $r \geq 0$.

\

\hspace{3mm} Now, to discuss traces of linear endomorphisms, assume that each homogeneous piece $V_{i}$ of $V$ is finite-dimensional. Let $\phi \in \End_{k}(V)$ be graded (with degree $0$) meaning that $\phi = \bop_{i \geq 0} \phi_{i}$, where $\phi_{i} \in \End_{k}(V_{i})$. This means that if $v \in V$ is a homogeneous element, then $\phi(v) \in V$ is a homogeneous element of degree $\deg(v)$ so that $\phi(v) = \phi_{\deg(v)}(v)$. Consider the Lefschetz series

$$L_{u}(\phi) := \sum_{i \geq 0}(-u)^{i}\Tr(\phi_{i}) \in k \llb u \rrb$$

\

of $\phi$ in $u$. It is important to note that when we have another graded endomorphism $\psi = \bop_{i \geq 0}\psi_{i}$ on $V$ and a constant $c \in k$, we have

$$L_{u}(\phi + c\psi) = L_{u}(\phi) + cL_{u}(\psi).$$

\

We also get the induced endomorphism $\phi^{\ot n} \in \End_{k}(V^{\ot n})$ given by 

$$\phi^{\ot n}(v_{1} \ot \cdots \ot v_{n}) = \phi(v_{1}) \ot \cdots \ot \phi(v_{n})$$

\

for homogeneous $v_{1}, \dots, v_{n} \in V$, which hence respects the grading of $V^{\ot n}$ so that we can write

$$\phi^{\ot n} = \bop_{r \geq 0}(\phi^{\ot n})_{r},$$

\

where

$$(\phi^{\ot n})_{r} = \bop_{i_{1} + \cdots + i_{n} = r} \phi_{i_{1}} \ot \cdots \ot \phi_{i_{n}} \in \End_{k}((V^{\ot n})_{r}) = \End_{k}\left(\bop_{i_{1} + \cdots + i_{n} = r} V_{i_{1}} \ot \cdots \ot V_{i_{n}}\right).$$

\

Given any $g \in G \leqs S_{n}$ and homogeneous $v_{1}, \dots, v_{n} \in V$, we define

\begin{align*}
(g \cdot \phi^{\ot n})(v_{1} \ot \cdots \ot v_{n}) &:= g (\phi(v_{1}) \ot \cdots \ot \phi(v_{n})) \\
&= g (p_{1}(\phi_{\deg(v_{1})}(v_{1})) \cup \cdots \cup p_{n}(\phi_{\deg(v_{n})}(v_{n}))) \\
&= p_{g(1)}(\phi_{\deg(v_{1})}(v_{1})) \cup \cdots \cup p_{g(n)}(\phi_{\deg(v_{n})}(v_{n})) \\
&= (-1)^{Q_{g}(\deg(v_{1}), \cdots, \deg(v_{n}))}\phi_{\deg(v_{g^{-1}(1)})}(v_{g^{-1}(1)}) \ot \cdots \ot \phi_{\deg(v_{g^{-1}(n)})}(v_{g^{-1}(n)}),
\end{align*}

\

where the action of $g$ on the right-hand side is given by the K\"unneth representation. This extends to a $k$-linear endomorphism $g \phi^{\ot n}$ on $V^{\ot n}$. It is important to note that we have the following commutativity:

\begin{lem}\label{comm} Keeping the notations above, we have

$$(g\phi^{\ot n})(v_{1} \ot \cdots \ot v_{n})= \phi^{\ot n}(g(v_{1} \ot \cdots \ot v_{n})).$$

\end{lem}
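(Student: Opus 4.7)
The plan is to verify the equality on homogeneous pure tensors $v_1 \ot \cdots \ot v_n$ and extend by $k$-linearity. The entire content of the lemma reduces to the observation that $\phi$ is graded of degree zero, so $\deg(\phi(v_i)) = \deg(v_i)$ for every $i$. Consequently the sign $(-1)^{Q_g(\deg v_1, \ldots, \deg v_n)}$ produced by the K\"unneth representation is the same whether it is computed from $v_1, \ldots, v_n$ or from $\phi(v_1), \ldots, \phi(v_n)$.

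Concretely, I would first expand the left-hand side using the definition of $g \cdot \phi^{\ot n}$ stated just above the lemma, which reads $g(\phi(v_1) \ot \cdots \ot \phi(v_n))$. Since each $\phi(v_i)$ is homogeneous of the same degree as $v_i$, applying the K\"unneth action of $g$ to the tuple $(\phi(v_1), \ldots, \phi(v_n))$ produces the sign $(-1)^{Q_g(\deg v_1, \ldots, \deg v_n)}$ and the permuted pure tensor $\phi(v_{g^{-1}(1)}) \ot \cdots \ot \phi(v_{g^{-1}(n)})$, exactly as displayed in the paragraph preceding the lemma.

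For the right-hand side, I would apply $g$ first via the K\"unneth representation, giving $(-1)^{Q_g(\deg v_1, \ldots, \deg v_n)} \, v_{g^{-1}(1)} \ot \cdots \ot v_{g^{-1}(n)}$, and then apply $\phi^{\ot n}$. Since $\phi^{\ot n}$ is $k$-linear, the scalar passes through, and by definition of $\phi^{\ot n}$ on pure tensors the result is $(-1)^{Q_g(\deg v_1, \ldots, \deg v_n)} \, \phi(v_{g^{-1}(1)}) \ot \cdots \ot \phi(v_{g^{-1}(n)})$, matching the left-hand side verbatim.

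The only delicate point — really the whole substance of the lemma — is that the two signs are literally equal as polynomial expressions in the degrees, which in turn requires $\deg(\phi(v_i)) = \deg(v_i)$; without the degree-zero hypothesis on $\phi$ the two sides would use different degree data and the identity would fail. Once the equality is confirmed on homogeneous pure tensors, it extends to arbitrary elements of $V^{\ot n}$ by $k$-multilinearity, since homogeneous pure tensors span $V^{\ot n}$. No serious obstacle arises beyond this bookkeeping.
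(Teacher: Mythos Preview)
Your proposal is correct and is exactly the natural verification; the paper itself states this lemma without proof, treating it as immediate from the fact that $\phi$ is graded of degree $0$, and what you have written is precisely the one-line unpacking of that observation.
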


\

\hspace{3mm} The following is the core of the proof of Theorem \ref{main}:

\begin{thm}[Trace formula on $V^{\ot n}$]\label{main2} Assume the notations given in this section. For any $\sigma \in S_{n}$, we have

$$L_{u}(\sigma \phi^{\ot n}) = L_{u}(\phi)^{m_{1}(\sigma)}L_{u^{2}}(\phi^{2})^{m_{2}(\sigma)} \cdots L_{u^{n}}(\phi^{n})^{m_{n}(\sigma)} \in k\llb u \rrb.$$
\end{thm}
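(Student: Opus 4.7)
The plan is to reduce the formula for a general $\sigma \in S_{n}$ to the case of a single cycle, and then to carry out a direct trace computation on that case. The reduction relies on two elementary multiplicativity principles for $L_{u}$ on graded tensor products, while the single-cycle computation is where the sign arithmetic from the K\"unneth representation does the real work.

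First, I would verify that for any two graded vector spaces $V, W$ with finite-dimensional homogeneous pieces and any graded endomorphisms $A$ on $V$ and $B$ on $W$, we have $L_{u}(A \otimes B) = L_{u}(A) L_{u}(B)$; this is a direct computation using the grading on $V \otimes W$. Next, I would use Lemma \ref{comm} together with the cyclicity of the trace on each graded piece to conclude that $L_{u}(\sigma \phi^{\otimes n})$ is invariant under conjugating $\sigma$ in $S_{n}$. Since $m_{i}(\sigma)$ is a class function on $S_{n}$, this lets me assume without loss of generality that $\sigma$ is a product of disjoint cycles $c_{1} \cdots c_{k}$ whose supports are contiguous blocks $\{1,\dots,\ell_{1}\}, \{\ell_{1}+1,\dots,\ell_{1}+\ell_{2}\},$ etc. For such $\sigma$, the inversion polynomial splits as $Q_{\sigma}(x_{1},\dots,x_{n}) = \sum_{i} Q_{c_{i}}(x_{\cdot})$ restricted to each block, so $\sigma \phi^{\otimes n}$ acts on $V^{\otimes n} \cong V^{\otimes \ell_{1}} \otimes \cdots \otimes V^{\otimes \ell_{k}}$ as the tensor product $(c_{1} \phi^{\otimes \ell_{1}}) \otimes \cdots \otimes (c_{k} \phi^{\otimes \ell_{k}})$. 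Combined with the first step, this reduces the theorem to proving, for a single $r$-cycle $c = (1\ 2\ \cdots\ r)$, the identity $L_{u}(c \phi^{\otimes r}) = L_{u^{r}}(\phi^{r})$.

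For the single cycle, I would fix a homogeneous basis $\{e_{\alpha}\}$ of $V$ with matrix $\phi(e_{\beta}) = \sum_{\alpha} \phi_{\alpha\beta} e_{\alpha}$, and compute the coefficient of $e_{j_{1}} \otimes \cdots \otimes e_{j_{r}}$ in $(c\phi^{\otimes r})(e_{j_{1}} \otimes \cdots \otimes e_{j_{r}})$. Since $\phi$ is graded and the cycle $c$ cyclically permutes the slots, the diagonal contributions force $\deg(e_{j_{1}}) = \cdots = \deg(e_{j_{r}}) = d$ for some common $d$, and the product of matrix entries telescopes into $(\phi_{d}^{r})_{j_{1} j_{1}}$ after reindexing. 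The hard part will be controlling the K\"unneth sign $(-1)^{Q_{c}(d,\dots,d)}$: for the $r$-cycle $c$, the number of inversions is $r-1$, so this sign equals $(-1)^{d^{2}(r-1)}$. Combined with the Lefschetz weight $(-u)^{rd}$ coming from the fact that the relevant graded piece of $V^{\otimes r}$ has degree $rd$, I would check the key identity $(-u)^{rd}(-1)^{d^{2}(r-1)} = (-u^{r})^{d}$ using $d^{2} \equiv d \pmod{2}$. Summing over $d$ and over $j_{1}$ then yields $L_{u}(c\phi^{\otimes r}) = \sum_{d}(-u^{r})^{d}\Tr(\phi_{d}^{r}) = L_{u^{r}}(\phi^{r})$, completing the argument.

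The main obstacle I anticipate is this sign bookkeeping in the single-cycle case: the miraculous simplification $(-u)^{rd}(-1)^{d^{2}(r-1)} = (-u^{r})^{d}$ is what makes the whole Lefschetz-series identity work, and it is precisely the place where the graded sign convention built into the K\"unneth representation is essential. The rest of the argument, namely multiplicativity of $L_{u}$ across tensor products and the conjugation-based reduction to disjoint contiguous cycles, is formal once Lemma \ref{comm} is in hand.
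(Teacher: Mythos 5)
Your proposal is correct, and it diverges from the paper's argument at the decisive step. Both proofs share the same skeleton up to a point: conjugation-invariance of $L_{u}(\sigma\phi^{\otimes n})$ (via Lemma \ref{comm} and cyclicity of the trace on each graded piece) reduces to $\sigma$ a product of disjoint cycles with contiguous supports, and Lemma \ref{properties} supplies both the additivity $Q_{\sigma} = \sum_{i} Q_{c_{i}}$ and the value $(r-1)d^{2}$ of $Q_{c}$ on a constant-degree block. From there the paper computes $\Tr(\sigma\phi^{\otimes n})_{r}$ for the whole product of cycles at once, but only after first reducing to diagonal $\phi$: it passes to an algebraically closed field, invokes Zariski-density of matrices with distinct eigenvalues, and conjugates each $\phi_{i}$ into diagonal form so that the trace becomes a sum over eigenbasis tensors fixed (up to sign) by $\sigma$. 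You avoid that entire reduction: the multiplicativity $L_{u}(A\otimes B) = L_{u}(A)L_{u}(B)$ splits the problem into single cycles, and for a single $r$-cycle the diagonal matrix entries telescope to $\Tr(\phi_{d}^{r})$ directly in any homogeneous basis --- this is the standard identity $\Tr(P_{c}\circ A^{\otimes r}) = \Tr(A^{r})$ for a cyclic permutation operator, valid over any field with no diagonalizability hypothesis. The sign bookkeeping you flag as the main obstacle is exactly right and exactly matches the paper's: your congruence $(-u)^{rd}(-1)^{(r-1)d^{2}} = (-u^{r})^{d}$, which follows from $d^{2}\equiv d \pmod{2}$, is the single-cycle instance of the paper's computation $Q_{\sigma}(i_{1},\dots,i_{n}) \equiv r + e_{1} + \cdots + e_{l} \pmod 2$. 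Your route is somewhat cleaner and more elementary, and it makes transparent that the identity needs no hypothesis on $k$; what the paper's diagonalization buys is that the final trace is a visibly factorable sum of monomials in eigenvalues, at the cost of the density argument.
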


\

\begin{rmk} In the setting of Section \ref{setup}, Theorem \ref{main2} gives

$$L_{u}(g \phi_{X^{n}}) = L_{u}(\phi)^{m_{1}(g)}L_{u^{2}}(\phi^{2})^{m_{2}(g)} \cdots L_{u^{n}}(\phi^{n})^{m_{n}(g)} \in k\llb u \rrb,$$

\

for any $g \in G \leqs S_{n}$ as mentioned in the introduction. We note that until now there is no extra condition on the field $k$.
\end{rmk}

\

\hspace{3mm} In our proof of Theorem \ref{main2}, we will make use of the following properties about the quadratic forms $Q_{g}(x_{1}, \dots, x_{n})$ defined above that we learned from \cite{Mac1}. Both properties are immediate from definition:

\begin{lem}\label{properties} For any disjoint $\sigma, \tau \in S_{n}$, we have 

$$Q_{\sigma\tau}(\bs{x}) = Q_{\sigma}(\bs{x}) + Q_{\tau}(\bs{x}).$$

\

If $\sigma$ is the cycle of the form $\sigma = (\ld + 1 \ \ld + 2 \ \cdots \ \ld + r)$ with $1 \leq r \leq n$ (and $0 \leq \ld \leq n-1$), then

$$Q_{\sigma}(\bs{x}) = (x_{\ld+1} + x_{\ld+2} + \cdots + x_{\ld+r-1})x_{\ld + r}.$$
\end{lem}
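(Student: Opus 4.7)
The plan is to verify both identities directly from the definition, which sets $\epsilon_{ij}(g) = 1$ precisely when $g(i) > g(j)$, by checking the coefficient of $x_i x_j$ on each side for every pair $1 \leq i < j \leq n$.

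For the second identity, I would take $\sigma = (\ld+1\ \ld+2\ \cdots\ \ld+r)$ and split the pairs $(i,j)$ into three cases: (a) at least one of $i, j$ lies outside the interval $\{\ld+1, \ldots, \ld+r\}$; (b) both lie inside with $j \neq \ld+r$; and (c) both lie inside with $j = \ld+r$. In case (a), since $\sigma$ preserves the interval setwise and fixes its complement, a short check yields $\sigma(i) < \sigma(j)$ (separately analyzing ``both outside'', ``$i$ to the left and $j$ inside'', ``$i$ inside and $j$ to the right''), so $\epsilon_{ij}(\sigma) = 0$. In case (b), writing $i = \ld+s$ and $j = \ld+t$ with $1 \leq s < t \leq r-1$, the shift $\sigma(i) = \ld+s+1 < \ld+t+1 = \sigma(j)$ again gives $\epsilon_{ij}(\sigma) = 0$. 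In case (c), the wrap-around $\sigma(j) = \ld+1 < \ld+s+1 = \sigma(i)$ gives $\epsilon_{ij}(\sigma) = 1$. The contributing pairs are thus $(\ld+s, \ld+r)$ for $s = 1, \ldots, r-1$, yielding $Q_\sigma = (x_{\ld+1} + \cdots + x_{\ld+r-1}) x_{\ld+r}$.

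For the first identity, let $S$ and $T$ be the supports of $\sigma$ and $\tau$. In the intended applications inside the proof of Theorem \ref{main2}, one reduces to canonical conjugacy-class representatives whose cycles are supported on consecutive-integer intervals, so $S$ and $T$ may be taken to be disjoint and non-interleaving (say $\max S < \min T$). For each pair $(i,j)$, I would case-split on which of $S$, $T$, or $\{1, \ldots, n\} \sm (S \cup T)$ contains $i$ and $j$. When both lie in $S$, $\tau$ fixes both so $\epsilon_{ij}(\tau) = 0$ and $\sigma\tau$ agrees with $\sigma$ on $\{i,j\}$, giving $\epsilon_{ij}(\sigma\tau) = \epsilon_{ij}(\sigma)$; the both-in-$T$ case is symmetric. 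When at most one of $i, j$ lies in a support, the non-acting permutation fixes the relevant index and the identity reduces to a single-permutation check. The only case requiring non-interleaving is the cross case $i \in S, j \in T$: since $\sigma(S) = S$ and $\tau(T) = T$ remain in their separated intervals, one has $\sigma(i) \leq \max S < \min T \leq \tau(j)$ together with $\sigma(j) = j$ and $\tau(i) = i$, so all three indicators $\epsilon_{ij}(\sigma), \epsilon_{ij}(\tau), \epsilon_{ij}(\sigma\tau)$ vanish, and the identity holds trivially. Summing over pairs yields $Q_{\sigma\tau} = Q_\sigma + Q_\tau$. The whole proof is bookkeeping with no conceptual obstacle, consistent with the author's remark that both properties are immediate from the definition.
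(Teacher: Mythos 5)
Your verification of the second identity is complete and correct, and since the paper offers no argument beyond ``immediate from definition,'' your direct coefficient-by-coefficient check is exactly the intended proof. The more interesting point is your treatment of the first identity: your instinct to restrict to non-interleaving supports is not just a convenience --- it is necessary, because the additivity $Q_{\sigma\tau} = Q_{\sigma} + Q_{\tau}$ is in fact \emph{false} for general disjoint permutations. For example, in $S_{4}$ take $\sigma = (1\ 3)$ and $\tau = (2\ 4)$: one computes $Q_{\sigma} = x_{1}x_{2} + x_{1}x_{3} + x_{2}x_{3}$, $Q_{\tau} = x_{2}x_{3} + x_{2}x_{4} + x_{3}x_{4}$, but $Q_{\sigma\tau} = x_{1}x_{3} + x_{1}x_{4} + x_{2}x_{3} + x_{2}x_{4}$, which differs from the sum even modulo $2$. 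The failure is precisely in your cross case $i \in S$, $j \in T$ with interleaved supports (here $(i,j) = (1,2)$ gives $\epsilon_{12}(\sigma) = 1$ but $\epsilon_{12}(\sigma\tau) = \epsilon_{12}(\tau) = 0$); the correct general cocycle identity involves permuting the arguments of one of the forms, not plain addition. Your separated-interval hypothesis ($\max S < \min T$) kills all three indicators in the cross case and makes every remaining pair reduce to a single-permutation check, so your argument is sound. Since the proof of Theorem \ref{main2} applies the lemma only to the canonical representative whose cycles occupy consecutive blocks, your restricted statement is exactly what the paper uses, and nothing downstream is affected --- but you have correctly identified (and silently repaired) an overstatement in the lemma as written.
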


\

\begin{proof}[Proof of Theorem \ref{main2}] Since the identity is only regarding traces of (homogeneous parts of) endomorphisms $\sigma \phi^{\ot n}$ and $\phi, \phi^{2}, \dots, \phi^{n},$ we may assume that $k$ is algebraically closed. Both sides of the identity are power series in $k \llb u \rrb$, so it is enough to show that for any $r \in \bZ_{\geq 0}$, their coefficients of $u^{r}$ match. This lets us reduce the problem to the case $V = V_{0} \op \cdots \op V_{r}$ and $\phi = \phi_{1} \op \cdots \op \phi_{r}$ essentially because

$$(V^{\ot n})_{r} = \bop_{i_{1} + \cdots + i_{n} = r} V_{i_{1}} \ot \cdots \ot V_{i_{r}},$$

\

where the right-hand side only consists of tensor products of $V_{0}, \dots, V_{r}$. In particular, we are now dealing with the case where $d = \dim_{k}(V) = \dim_{k}(V_{0}) + \cdots + \dim_{k}(V_{r})$ is finite.

\

\hspace{3mm} Considering $\phi \in \Mat_{d}(k) = \bA^{d^{2}}(k)$, where $d = \dim_{k}(V)$, we note that the desired equality for the coefficients of $u^{r}$ cuts out a closed subset in $\bA^{d^{2}}(k)$, with respect to the Zariski topology (on the set of closed points in $\bA^{d^{2}}$ over $k$) as we can use the Kronecker product for the matrix form of $\phi^{\ot n}$. The matrices with distinct eigenvalues form a Zariski open subset in $\Mat_{d}(k) = \bA^{d^{2}}(k)$ because we can understand them as points of the locus whose discriminant of the characteristic polynomial is nonzero. Note that we used perfectness of $k$, as now $k$ is algebriaclly closed, to ensure that any separable monic polynomial in $k[x]$ is square-free. This open locus is nonempty because $k$ has at least $d$ elements as it is infinite now that we are in the setting where $k$ is algebraically closed. Thus, such matrices are dense in $\Mat_{d}(k) = \bA^{d^{2}}(k)$, as the affine space is irreducible. This means that it is enough show the desired statement for $\phi$ with distinct eigenvalues, and this means that each $\phi_{i}$ is diagonalizable.

\

\hspace{3mm} Thus, we may find $g_{i} \in \GL_{d_{i}}(k) = \GL(V_{i})$ such that $g_{i}\phi_{i}g_{i}^{-1}$ is a diagonal matrix whose diagonal entries are eigenvalues of $\phi_{i}$, where $d_{i} = \dim_{k}(V_{i})$. Then $g_{i}\phi_{i}^{m}g_{i}^{-1}$ for any $m \geq 1$ is a diagonal matrix whose diagonal entries consists of $m$-th powers of the full list eigenvalues of $\phi_{i}$ counting with multiplicity. Writing $g = g_{1} \op \cdots \op g_{r} \in \GL_{d}(k)$, we see $gFg^{-1} = g_{1}\phi_{1}g_{1}^{-1} \op \cdots \op g_{r}\phi_{r}g_{r}^{-1}$ is a diagonal matrix, and so is 

$$(g \phi g^{-1})^{m} = g\phi^{m}g^{-1} = g_{1}\phi_{1}^{m}g_{1}^{-1} \op \cdots \op g_{r}\phi_{r}^{m}g_{r}^{-1}.$$

\

Note that $g$ respects the grading of $V$ and commutes with the action of $\sigma$:
 
\begin{align*}
g \sigma(v_{1} \ot \cdots \ot v_{n}) &= (-1)^{Q_{\sigma}(\deg(v_{1}), \cdots, \deg(v_{n}))} g (v_{\sigma^{-1}(1)} \ot \cdots \ot v_{\sigma^{-1}(n)}) \\
&= (-1)^{Q_{\sigma}(\deg(v_{1}), \cdots, \deg(v_{n}))} (g_{\deg(v_{1})} v_{\sigma^{-1}(1)}) \ot \cdots \ot (g_{\deg(v_{n})} v_{\sigma^{-1}(n)}) \\
&= \sigma ((g_{\deg(v_{1})} v_{1}) \ot \cdots \ot (g_{\deg(v_{n})} v_{n})) \\
&= \sigma g (v_{1} \ot \cdots \ot v_{n}),
\end{align*}

\

for homogeneous $v_{1}, \dots, v_{n} \in V$. Since $(g \phi g^{-1})^{\ot n} = g^{\ot n} \phi^{\ot n} (g^{-1})^{\ot n}$, we have

$$(\sigma(g \phi g^{-1})^{\ot n})_{r} =  (g^{\ot n} \sigma \phi^{\ot n} (g^{-1})^{\ot n})_{r} = (g^{\ot n})_{r} (\sigma \phi^{\ot n})_{r} ((g^{-1})^{\ot n})_{r}.$$

\

Since 

$$g^{\ot n} (g^{-1})^{\ot n}(v_{1} \ot \cdots \ot v_{n}) = gg^{-1}v_{1} \ot \cdots \ot gg^{-1}v_{n} = v_{1} \ot \cdots \ot v_{n}$$

\

for any homogeneous $v_{1}, \dots, v_{n} \in V$, we see that $(g^{\ot n})_{r}$ and $((g^{-1})^{\ot n})_{r}$ are both invertible $k$-linear endomorphisms on $(V^{\ot n})_{r}$. Thus, replacing $\phi$ with $g \phi g^{-1}$ and $\phi_{i}^{m}$ with $g_{i}\phi_{i}^{m}g_{i}^{-1}$ will not affect the desired identity, so our problem is reduced to the case where each $\phi_{i}$ is diagonal, which in particular lets us assume that  $\phi$ is diagonal.

\

\hspace{3mm} Let $v_{i,1}, \dots, v_{i,d_{i}} \in V_{i}$ be homogeneous elements forming an eigenbasis of $V_{i}$ for $\phi_{i}$ as we vary $i \geq 0$. We shall denote the corresponding eigenvalues as $\alpha_{i,j} \in k$ so that $\phi(v_{i,j}) = \phi_{i}(v_{i,j}) = \alpha_{i,j}v_{i,j}$. To compute the coefficient of $u^{r}$ on the left-hand side, fix any element 

$$w_{1} \ot \cdots \ot w_{n} \in (V^{\ot n})_{r} = \bop_{i_{1} + \cdots + i_{n} = r}V_{i_{1}} \ot \cdots \ot V_{i_{n}},$$

\

where $w_{j} = v_{i_{j}, h_{j}}$ for some $h_{j}$ so that $\deg(w_{j}) = i_{j}$ and $\phi(w_{j}) = \phi_{i_{j}}(w_{j}) = \alpha_{i_{j}, h_{j}}w_{j}$. We have

\begin{align*}
(\sigma \phi^{\ot n}) (w_{1} \ot \cdots \ot w_{n}) &= \phi^{\ot n}( \sigma (w_{1} \ot \cdots \ot w_{n}) ) \\
&= (-1)^{Q_{\sigma}(i_{1}, \dots, i_{n})} \phi(w_{\sigma^{-1}(1)}) \ot \cdots \ot \phi(w_{\sigma^{-1}(n)}) \\
&= (-1)^{Q_{\sigma}(i_{1}, \dots, i_{n})} \alpha_{i_{\sigma^{-1}(1), h_{\sigma^{-1}(1)}}}w_{\sigma^{-1}(1)} \ot \cdots \ot \alpha_{i_{\sigma^{-1}(n), h_{\sigma^{-1}(n)}}}w_{\sigma^{-1}(n)} \\
&= \alpha_{i_{\sigma^{-1}(1), h_{\sigma^{-1}(1)}}} \cdots \alpha_{i_{\sigma^{-1}(n), h_{\sigma^{-1}(n)}}} (-1)^{Q_{\sigma}(i_{1}, \dots, i_{n})} w_{\sigma^{-1}(1)} \ot \cdots \ot w_{\sigma^{-1}(n)} \\
&= \alpha_{i_{1, h_{1}}} \cdots \alpha_{i_{n}, h_{n}} (-1)^{Q_{\sigma}(i_{1}, \dots, i_{n})} w_{\sigma^{-1}(1)} \ot \cdots \ot w_{\sigma^{-1}(n)},
\end{align*}

\

so the vector $w_{1} \ot \cdots \ot w_{n}$ can possibly contribute nonzero amount to $\Tr(\tau \phi^{\ot n})_{r}$ when $w_{j} = w_{\sigma^{-1}(j)}$ for all $1 \leq j \leq n$. Now, the key is to note that the statement only depends on the cycle type of $\sigma$ in $S_{n}$ because any other $\tau \in S_{n}$ with the same cycle type is conjugate to $\sigma$ in $S_{n}$ so that $\tau = \omega \sigma \omega^{-1}$ for some $\omega \in S_{n}$ gives us

$$\Tr(\tau \phi^{\ot n})_{r} = \Tr(\omega \sigma \omega^{-1} \phi^{\ot n})_{r} = \Tr(\omega \sigma \phi^{\ot n} \omega^{-1})_{r} = \Tr(\omega_{r} (\sigma \phi^{\ot n})_{r} (\omega^{-1})_{r}) = \Tr(\sigma \phi^{\ot n})_{r}.$$

\

Thus, we have reduced the problem to the case where we have the following cycle decomposition for $\sigma$:

$$\sigma = (1 \ \cdots \ \ld_{1})(\ld_{1} + 1 \ \cdots \ld_{1} + \ld_{2}) \cdots (\ld_{1} + \cdots + \ld_{l-1} + 1 \ \cdots \ \ld_{1} + \cdots + \ld_{l}),$$

\

where $\ld_{1} + \cdots + \ld_{l} = n$. In this situation, saying that $w_{j} = w_{\sigma^{-1}(j)}$ for all $1 \leq j \leq n$ is equivalent to saying

\

\begin{itemize}
	\item $y_{1} := w_{1} = \cdots = w_{\ld_{1}}$,
	\item $y_{2} := w_{\ld_{1} + 1} = \cdots = w_{\ld_{1} + \ld_{2}}$,

\hspace{3mm} $\vdots$

	\item $y_{l} := w_{\ld_{1} + \cdots + \ld_{l-1} + 1}= \cdots = w_{\ld_{1} + \cdots + \ld_{l}}$,
\end{itemize}

\

while $y_{1}, \dots, y_{l}$ may or may not be distinct. This also shows that

\

\begin{itemize}
	\item $e_{1} := \deg(y_{1}) = i_{1} = \cdots = i_{\ld_{1}}$,
	\item $e_{2} := \deg(y_{2}) = i_{\ld_{1} + 1} = \cdots = i_{\ld_{1} + \ld_{2}}$,

\hspace{3mm} $\vdots$

	\item $e_{l} := \deg(y_{l}) = i_{1 + \cdots + \ld_{l-1} + 1} = \cdots = i_{\ld_{1} + \cdots + \ld_{l}}$.
\end{itemize}

\

Thus, we also have

\

\begin{itemize}
	\item $\alpha_{1} := \alpha_{i_{1}, h_{1}} = \cdots = \alpha_{i_{\ld_{1}}, h_{\ld_{1}}}$,
	\item $\alpha_{2} := \alpha_{i_{\ld_{1} + 1}, h_{\ld_{1} + 1}} = \cdots = \alpha_{i_{\ld_{1} + \ld_{2}}, h_{\ld_{1} + \ld_{2}}}$,

\hspace{3mm} $\vdots$

	\item $\alpha_{l} := \alpha_{i_{\ld_{1} + \cdots + \ld_{l-1} + 1}, h_{\ld_{1} + \cdots + \ld_{l-1} + 1}}= \cdots = \alpha_{i_{n}, h_{n}}$.
\end{itemize}

\

Note that $y_{j} \in V_{e_{j}}$ and $\phi(y_{j}) = \phi_{e_{j}}(y_{j}) = \alpha_{j}y_{j}$. We also note that $\ld_{1}e_{1} + \cdots + \ld_{l}e_{l} = r$ because $(V_{e_{1}})^{\ot \ld_{1}} \ot \cdots \ot (V_{e_{l}})^{\ot \ld_{l}}$ is a direct summand of $(V^{\ot n})_{r}$ in the decomposition of $V^{\ot n}$ that gives the grading for the tensor product.

\

\hspace{3mm} Thus, for this particular $\sigma$, applying Lemma \ref{properties}, we have

\begin{align*}
Q_{\sigma}(i_{1}, \dots, i_{n}) &= Q_{(1 \ \cdots \ \ld_{1})}(i_{1}, \cdots, i_{n}) + \cdots + Q_{(\ld_{1} + \cdots + \ld_{l-1} + 1 \ \cdots \ \ld_{1} + \cdots + \ld_{l})}(i_{1}, \cdots, i_{n}) \\
&= (i_{1} + \cdots + i_{\ld_{1}-1})i_{\ld_{1}} + \cdots + (i_{\ld_{1} + \cdots + \ld_{l-1} + 1} + \cdots + i_{\ld_{1} + \cdots + \ld_{l}-1})i_{\ld_{1} + \cdots + \ld_{l}}\\
&= (\ld_{1}-1)e_{1} \cdot e_{1} + \cdots + (\ld_{l}-1)e_{l} \cdot e_{l} \\
&= (\ld_{1} - 1)e_{1}^{2} + \cdots + (\ld_{l} - 1)e_{l}^{2}.
\end{align*}

\

This implies that

\begin{align*}
Q_{\sigma}(i_{1}, \dots, i_{n}) &\equiv (\ld_{1} + 1)e_{1} + \cdots + (\ld_{l} + 1)e_{l} \\
&= r + e_{1} + \cdots + e_{l},
\end{align*}

\

where the equivalence is taken modulo $2$. Hence, we have computed the sign:

$$(-1)^{Q_{\sigma}(i_{1}, \dots, i_{n})} = (-1)^{r + e_{1} + \cdots + e_{l}}.$$

\

This implies that the vector $v_{1} \ot \cdots \ot v_{n} = y_{1}^{\ot \ld_{1}} \ot \cdots y_{l}^{\ot \ld_{l}}$ contributes

\begin{align*}
(-1)^{r + e_{1} + \cdots + e_{l}} \alpha_{i_{1}, h_{1}} \cdots \alpha_{i_{n}, h_{n}} &= (-1)^{r + e_{1} + \cdots + e_{l}} \alpha_{1}^{\ld_{1}} \cdots \alpha_{l}^{\ld_{l}}
\end{align*}

\

to $\Tr(\sigma \phi^{\ot n})_{r}$. Keeping the partition $[\ld_{1}, \dots, \ld_{l}] \vdash n$, which is the equivalent datum to the cycle decomposition of $\sigma$, we have

$$\Tr(\sigma \phi^{\ot n})_{r} = \sum_{\ld_{1}e_{1} + \cdots + \ld_{l}e_{l} = r} \sum_{\substack{(y_{1}, \dots, y_{n}) \in V_{e_{1}} \times \cdots \times V_{e_{l}} \\ \text{ basis elements of } V_{e_{i}}}} (-1)^{r + e_{1} + \cdots + e_{l}} \alpha_{1}^{\ld_{1}} \cdots \alpha_{l}^{\ld_{l}}.$$

\

This implies that

\begin{align*}
L_{u}(\sigma \phi^{\ot n}) &= \sum_{r \geq 0} (-u)^{r}\Tr(\sigma \phi^{\ot n})_{r} \\
&= \sum_{r \geq 0}  \sum_{\ld_{1}e_{1} + \cdots + \ld_{l}e_{l} = r} \sum_{\substack{(y_{1}, \dots, y_{n}) \in V_{e_{1}} \times \cdots \times V_{e_{l}} \\ \text{ basis elements of } V_{e_{i}}}} (-1)^{e_{1} + \cdots + e_{l}} \alpha_{1}^{\ld_{1}} \cdots \alpha_{l}^{\ld_{l}} u^{r} \\
&= \sum_{r \geq 0}  \sum_{\ld_{1}e_{1} + \cdots + \ld_{l}e_{l} = r} \sum_{\substack{(y_{1}, \dots, y_{n}) \in V_{e_{1}} \times \cdots \times V_{e_{l}} \\ \text{ basis elements of } V_{e_{i}}}} (-1)^{e_{1} + \cdots + e_{l}} \alpha_{1}^{\ld_{1}} \cdots \alpha_{l}^{\ld_{l}} u^{\ld_{1}e_{1} + \cdots + \ld_{l}e_{l}} \\
&= \sum_{r \geq 0}  \sum_{\ld_{1}e_{1} + \cdots + \ld_{l}e_{l} = r} \sum_{\substack{(y_{1}, \dots, y_{n}) \in V_{e_{1}} \times \cdots \times V_{e_{l}} \\ \text{ basis elements of } V_{e_{i}}}} \alpha_{1}^{\ld_{1}} \cdots \alpha_{l}^{\ld_{l}} (-u^{\ld_{1}})^{e_{1}} \cdots (-u^{\ld_{l}})^{e_{l}} \\
&= \sum_{r \geq 0}  \sum_{\ld_{1}e_{1} + \cdots + \ld_{l}e_{l} = r} \left(\sum_{\substack{y_{1} \in V_{e_{1}} \\ \text{ basis elements}}} \alpha_{1}^{\ld_{1}} (-u^{\ld_{1}})^{e_{1}}\right) \cdots \left(\sum_{\substack{y_{l} \in V_{e_{l}} \\ \text{ basis elements}}} \alpha_{l}^{\ld_{l}} (-u^{\ld_{l}})^{e_{l}}\right) \\
&= \sum_{r \geq 0}  \sum_{\ld_{1}e_{1} + \cdots + \ld_{l}e_{l} = r} \Tr(\phi_{e_{1}}^{\ld_{1}}) (-u^{\ld_{1}})^{e_{1}} \cdots \Tr(\phi_{e_{l}}^{\ld_{l}}) (-u^{\ld_{l}})^{e_{l}} \\
&= \sum_{e_{1}, \dots e_{l} \geq 0} \Tr(\phi_{e_{1}}^{\ld_{1}}) (-u^{\ld_{1}})^{e_{1}} \cdots \Tr(\phi_{e_{l}}^{\ld_{l}}) (-u^{\ld_{l}})^{e_{l}} \\
&= \left(\sum_{e_{1} \geq 0} \Tr(\phi_{e_{1}}^{\ld_{1}}) (-u^{\ld_{1}})^{e_{1}}\right) \cdots \left(\sum_{e_{l} \geq 0} \Tr(\phi_{e_{l}}^{\ld_{l}}) (-u^{\ld_{l}})^{e_{l}}\right) \\
&= \left(\sum_{i \geq 0} \Tr(\phi_{i}^{\ld_{1}}) (-u^{\ld_{1}})^{i}\right) \cdots \left(\sum_{i \geq 0} \Tr(\phi_{i}^{\ld_{l}}) (-u^{\ld_{l}})^{i}\right) \\
&= \left(\sum_{i \geq 0} \Tr(\phi_{i}) (-u)^{i}\right)^{m_{1}(\sigma)} \left(\sum_{i \geq 0} \Tr(\phi_{i}^{2}) (-u^{2})^{i}\right)^{m_{2}(\sigma)} \cdots \left(\sum_{i \geq 0} \Tr(\phi_{i}^{n}) (-u^{n})^{i}\right)^{m_{n}(\sigma)} \\
&= L_{u}(\phi)^{m_{1}(\sigma)}L_{u^{2}}(\phi^{2})^{m_{2}(\sigma)} \cdots L_{u^{n}}(\phi^{n})^{m_{n}(\sigma)}.
\end{align*}

\

as desired, where we note that $\alpha_{i}$ appearing in the computation above depends on the choice of $y_{i} \in V_{e_{i}}$.
\end{proof}

\

\subsection{Proof of Theorem \ref{main}} Keeping all the notations in the previous subsection, the following immediately proves Theorem \ref{main}:

\begin{thm}[Trace formula on $(V^{\ot n})^{G}$]\label{key} Let $G \leqslant S_{n}$ such that $|G| \neq 0$ in $k$. Then

$$L_{u}(\phi^{\ot n} |_{ (V^{\ot n})^{G} }) = \frac{1}{|G|}\sum_{g \in G}L_{u}(\phi)^{m_{1}(g)}L_{u^{2}}(\phi^{2})^{m_{2}(g)} \cdots L_{u^{n}}(\phi^{n})^{m_{n}(g)}.$$
\end{thm}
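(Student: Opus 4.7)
The plan is to reduce Theorem \ref{key} to Theorem \ref{main2} via the standard averaging trick. Since $|G|$ is invertible in $k$, the element $\pi_G := \frac{1}{|G|} \sum_{g \in G} g \in \End_k(V^{\ot n})$ is a $k$-linear idempotent whose image is exactly the invariant subspace $(V^{\ot n})^G$. Because the K\"unneth representation preserves the grading on $V^{\ot n}$, each $g \in G$ (and hence $\pi_G$) is a graded endomorphism of degree $0$, so $\pi_G$ restricts to an idempotent on each homogeneous piece $(V^{\ot n})_r$ with image $((V^{\ot n})^G)_r$.

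Next, I would invoke Lemma \ref{comm}, which says that $\phi^{\ot n}$ commutes with every $g \in G$, to conclude that $\phi^{\ot n}$ preserves $(V^{\ot n})^G$ and that $\pi_G \circ \phi^{\ot n} = \phi^{\ot n} \circ \pi_G$. With respect to the decomposition $V^{\ot n} = (V^{\ot n})^G \oplus \ker(\pi_G)$, the composition $\pi_G \circ \phi^{\ot n}$ acts as $\phi^{\ot n}|_{(V^{\ot n})^G}$ on the first summand and as zero on the second. Taking traces on each graded piece and using linearity of $\pi_G$ in its definition therefore yields
$$\Tr\bigl((\phi^{\ot n})_r|_{((V^{\ot n})^G)_r}\bigr) = \Tr\bigl((\pi_G \phi^{\ot n})_r\bigr) = \frac{1}{|G|}\sum_{g \in G} \Tr\bigl((g\phi^{\ot n})_r\bigr).$$

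Multiplying by $(-u)^r$ and summing over $r \geq 0$ gives $L_u(\phi^{\ot n}|_{(V^{\ot n})^G}) = \frac{1}{|G|} \sum_{g \in G} L_u(g\phi^{\ot n})$, and applying Theorem \ref{main2} termwise to the right-hand side delivers the stated identity. The only step requiring any care is verifying that $\pi_G$ is well-defined and graded of degree $0$, which is precisely where the hypothesis that $\ch(k) \nmid |G|$ enters; everything else is formal given the heavy computational lifting already carried out in Theorem \ref{main2}. I do not anticipate a genuine obstacle here—this final theorem is essentially a packaging of Theorem \ref{main2} with Maschke-style averaging, and the hardest part of the paper is really the proof of Theorem \ref{main2} itself.
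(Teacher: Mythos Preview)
Your proposal is correct and follows essentially the same approach as the paper: both introduce the averaging idempotent $e_G = \pi_G$, use Lemma \ref{comm} to pass from the trace on the invariants to $\frac{1}{|G|}\sum_{g \in G}\Tr((g\phi^{\ot n})_r)$ on all of $V^{\ot n}$, and then invoke Theorem \ref{main2}. If anything, your version is slightly more explicit about why $\Tr\bigl(\phi^{\ot n}|_{(V^{\ot n})^G}\bigr) = \Tr(\pi_G\phi^{\ot n})$ via the decomposition $V^{\ot n} = (V^{\ot n})^G \oplus \ker(\pi_G)$, a step the paper leaves somewhat implicit.
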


\begin{proof} Since $|G| \neq 0$ in $k$, we can consider the averaging operator $e_{G} : V^{\ot n} \rightarrow V^{\ot n}$ given by

$$e_{G}(\alpha) := \frac{1}{|G|}\sum_{g \in G} g\alpha,$$

\

where again, we use the K\"unneth representation of $G$ on $V^{\ot n}$ introduced in the beginning of this section. Note that we have $(V^{\ot n})^{G} = e_{G}(V^{\ot n})$, so any element of $(V^{\ot n})^{G}$ can be written as $e_{G}(\alpha)$ with $\alpha \in V^{\ot n}$. Using Lemma \ref{comm}, we have

$$\phi^{\ot n}(e_{G}(\alpha)) = \phi^{\ot n}\left(\frac{1}{|G|}\sum_{\sigma \in G}\sigma\alpha\right) = \frac{1}{|G|}\sum_{\sigma \in G}\sigma \phi^{\ot n}(\alpha).$$

\

Thus, we have shown that 

$$\phi^{\ot n} \circ e_{G} = \frac{1}{|G|}\sum_{\sigma \in G}\sigma \phi^{\ot n} \in \End_{k}(V^{\ot n}).$$

\

Note that both sides restrict to $(V^{\ot n})^{G}$, and since $e_{G}$ is the identity on $(V^{\ot n})^{G}$, we get

$$\phi^{\ot n}|_{(V^{\ot n})^{G}} = \frac{1}{|G|}\sum_{\sigma \in G}\sigma \phi^{\ot n} \in \End_{k}((V^{\ot n})^{G}).$$

\

Applying $L_{u}$ both sides, we get

$$L_{u}(\phi^{\ot n}|_{(V^{\ot n})^{G}}) = \frac{1}{|G|}\sum_{\sigma \in G}L_{u}(\sigma \phi^{\ot n}),$$

\

so applying Theorem \ref{main2}, we are done.
\end{proof}

\

\section{Alternating powers}\label{alt}

\hspace{3mm} In the introduction, say for Corollary \ref{S_{n}}, we only cared about full symmetric groups $(S_{n})_{n \in \bZ_{\geq 0}}$. It is natural to consider other sequence of subgroups of $S_{n}$ for $n \in \bZ_{\geq 0}$. In this section, we consider alternating groups, using a well-known lemma in combinatorics:

\begin{lem}[p.36 of \cite{HN}]\label{comb1} For any $n \in \bZ_{\geq 2}$, we have the following identity relating cycle indices of $A_{n}$ and $S_{n}$:

$$Z_{A_{n}}(x_{1}, x_{2}, \dots, x_{n}) = Z_{S_{n}}(x_{1}, x_{2} \dots, x_{n}) + Z_{S_{n}}(x_{1}, -x_{2}, \dots, (-1)^{n+1}x_{n}).$$
\end{lem}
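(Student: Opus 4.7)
The plan is to unwind both sides of the claimed identity directly from the definition of the cycle index, and then to observe that the substitution $x_i \mapsto (-1)^{i+1} x_i$ in $Z_{S_n}$ has the effect of weighting each permutation $g \in S_n$ by its sign $\sgn(g)$.

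More concretely, first I would recall that an $i$-cycle is a product of $i-1$ transpositions, so any $g \in S_n$ with cycle type $(m_1(g), \dots, m_n(g))$ satisfies
$$\sgn(g) = \prod_{i=1}^{n} (-1)^{(i-1)m_i(g)} = (-1)^{\sum_{i=1}^{n}(i-1)m_i(g)}.$$
Since $(i+1) - (i-1) = 2$, this can equivalently be written as $\sgn(g) = \prod_{i=1}^{n}(-1)^{(i+1)m_i(g)}$. Next I would substitute into the cycle index:
$$Z_{S_n}(x_1, -x_2, \dots, (-1)^{n+1}x_n) = \frac{1}{n!}\sum_{g \in S_n} \prod_{i=1}^{n}\bigl((-1)^{i+1}x_i\bigr)^{m_i(g)} = \frac{1}{n!}\sum_{g \in S_n} \sgn(g)\, x_1^{m_1(g)}\cdots x_n^{m_n(g)}.$$

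Adding this to $Z_{S_n}(x_1, \dots, x_n) = \frac{1}{n!}\sum_{g \in S_n} x_1^{m_1(g)}\cdots x_n^{m_n(g)}$ and grouping,
$$Z_{S_n}(x_1, \dots, x_n) + Z_{S_n}(x_1, -x_2, \dots, (-1)^{n+1}x_n) = \frac{1}{n!}\sum_{g \in S_n}\bigl(1 + \sgn(g)\bigr) x_1^{m_1(g)}\cdots x_n^{m_n(g)}.$$
The factor $1+\sgn(g)$ is $2$ for $g \in A_n$ and $0$ otherwise, so only even permutations survive, each contributing a factor of $2/n!$. Using $|A_n| = n!/2$ (which holds precisely because $n \geq 2$), the right-hand side becomes $\frac{1}{|A_n|}\sum_{g \in A_n} x_1^{m_1(g)}\cdots x_n^{m_n(g)} = Z_{A_n}(x_1, \dots, x_n)$, as desired.

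There is essentially no obstacle here: the only thing to watch is the parity bookkeeping, namely that $(-1)^{i+1}$ and $(-1)^{i-1}$ agree, so that the substitution really produces the sign character rather than something off by a global sign. The hypothesis $n \geq 2$ is used only to guarantee $A_n$ has index $2$ in $S_n$ (equivalently, that $\sgn$ is a nontrivial character); for $n \in \{0,1\}$ both sides would instead differ by a factor of $2$.
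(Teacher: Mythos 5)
Your proof is correct, and it is the standard argument: the substitution $x_i \mapsto (-1)^{i+1}x_i$ inserts the sign character $\sgn(g) = (-1)^{\sum_i (i-1)m_i(g)}$ into the sum over $S_n$, and averaging $1 + \sgn(g)$ isolates $A_n$, with $n \geq 2$ needed exactly so that $|A_n| = n!/2$. The paper itself gives no proof and only cites the identity, so there is nothing to compare against; your bookkeeping is accurate and the argument is complete.
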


\

\hspace{3mm} Given a sequence $G_{n} \leqs S_{n}$ of subgroups for $n \in \bZ_{\geq 0}$, we write

$$Z_{G_{\bl}}(\bs{x}, t) := \sum_{n=0}^{\infty}Z_{G_{n}}(\bs{x})t^{n} \in \bQ[\bs{x}]\llb t \rrb.$$

\

\begin{cor}\label{comb2} We have
$$Z_{A_{\bl}}(\bs{x}, t) = Z_{S_{\bl}}(\bs{x}, t) + \frac{1}{Z_{S_{\bl}}(\bs{x}, -t)} - 1 - x_{1}.$$
\end{cor}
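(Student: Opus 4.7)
The plan is to apply Lemma \ref{comb1} termwise, summed against $t^n$ for $n \geq 2$, and then correct for the $n = 0$ and $n = 1$ contributions. Since $A_0 = S_0$ and $A_1 = S_1$ are trivial, $Z_{A_0}(\bs{x}) = 1$ and $Z_{A_1}(\bs{x}) = x_1$, and Lemma \ref{comb1} is stated only for $n \geq 2$, so these low-order terms must be added back by hand at the end.

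The crux is to recognize the series $\sum_{n \geq 0} Z_{S_n}(x_1, -x_2, x_3, -x_4, \ldots)\, t^n$ in closed form. Starting from the identity
\[
Z_{S_\bl}(\bs{x}, t) = \exp\!\left( \sum_{r=1}^{\infty} \frac{x_r t^r}{r} \right)
\]
recorded in the paper just before Corollary \ref{S_{n}}, the substitution $x_r \mapsto (-1)^{r+1} x_r$ yields
\[
\sum_{n=0}^{\infty} Z_{S_n}(x_1, -x_2, \dots, (-1)^{n+1} x_n)\, t^n = \exp\!\left( -\sum_{r=1}^{\infty} \frac{x_r (-t)^r}{r} \right) = \frac{1}{Z_{S_\bl}(\bs{x}, -t)},
\]
using $(-1)^{r+1} t^r = -(-t)^r$ and recognizing the right-hand side as the reciprocal of the same exponential evaluated at $-t$. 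This one observation is what forces the $1/Z_{S_\bl}(\bs{x}, -t)$ term in the claim.

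From this point the argument is bookkeeping: summing Lemma \ref{comb1} over $n \geq 2$ and substituting the two closed forms (each minus its own $n=0, 1$ contribution, which is $1 + x_1 t$ in both cases), and then adding back $Z_{A_0}(\bs{x}) + Z_{A_1}(\bs{x}) t = 1 + x_1 t$, gives the claimed identity after collecting like terms.

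There is no substantive obstacle; the entire proof is a formal manipulation of power series in $\bQ[\bs{x}]\llb t \rrb$. The only real insight is the sign-substitution trick linking $x_r \mapsto (-1)^{r+1} x_r$ to the reciprocal of the Stanley exponential evaluated at $-t$; once this is observed, the rest is arithmetic with the finitely many low-degree correction terms.
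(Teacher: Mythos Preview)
Your proof is correct and follows essentially the same route as the paper: both identify the key substitution $x_{r}\mapsto(-1)^{r+1}x_{r}$ in the exponential generating function to produce $1/Z_{S_{\bl}}(\bs{x},-t)$, then invoke Lemma \ref{comb1}. Your explicit bookkeeping for $n=0,1$ is a welcome addition (the paper's proof elides it), and in fact carrying it through yields the correction term $-1 - x_{1}t$ rather than $-1 - x_{1}$; the missing $t$ in the stated corollary is a typo, as confirmed by the later applications in Section \ref{alt} (e.g., the formula $Z_{X}(t) + 1/Z_{X}(-t) - 1 - |X(\bF_{q})|t$) where the $t$ reappears.
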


\begin{proof} Recall from the introduction that

$$\sum_{n=0}^{\infty}Z_{S_{n}}(x_{1}, \dots, x_{n}) t^{n} = \exp\lt(\sum_{r=1}^{\infty}\frac{x_{r}}{r}t^{r}\rt),$$

\

which implies that

\begin{align*}
\sum_{n=0}^{\infty}Z_{S_{n}}(x_{1}, -x_{2} \dots, (-1)^{n+1}x_{n}) t^{n} &= \exp\lt(\sum_{r=1}^{\infty}\frac{(-1)^{r+1}x_{r}}{r}t^{r}\rt) \\
&= \exp\lt(-\sum_{r=1}^{\infty}\frac{x_{r}}{r}(-t)^{r}\rt) \\
&= \exp\lt(\sum_{r=1}^{\infty}\frac{x_{r}}{r}(-t)^{r}\rt)^{-1} \\
&= Z_{S_{\bl}}(\bs{x}, -t)^{-1}.
\end{align*}

\

Therefore, applying Lemma \ref{comb1}, we are done.
\end{proof}

\

\hspace{3mm} Applying Lemma \ref{comb1} and Corollary \ref{comb2}, we have the following:

\begin{cor}\label{A_{n}} Assume the same hypotheses as in Theorem \ref{main}. Then we have

$$\sum_{n=0}^{\infty}L_{u}(\phi_{X^{n}}|_{H^{\bl}(X^{n})^{A_{n}}}) t^{n} = \sum_{n=0}^{\infty}L_{u}(\phi_{X^{n}}|_{H^{\bl}(X^{n})^{S_{n}}}) t^{n} + \frac{1}{\sum_{n=0}^{\infty}L_{u}(\phi_{X^{n}}|_{H^{\bl}(X^{n})^{S_{n}}}) (-t)^{n}} - 1 - L_{u}(\phi).$$

\

If $\dim_{k}(H^{\bl}(X))$ is finite so that $H^{i}(X) = 0$ for all $i > 2d$ for some $d$, then

$$\sum_{n=0}^{\infty}L_{u}(\phi_{X^{n}}|_{H^{\bl}(X^{n})^{A_{n}}}) t^{n} = \prod_{i=0}^{2d} \left( \frac{1}{\det(\id_{H^{i}(X)} - \phi_{i}u^{i}t)} \right)^{(-1)^{i}} +  \prod_{i=0}^{2d} \left( \frac{1}{\det(\id_{H^{i}(X)} + \phi_{i}u^{i}t)} \right)^{(-1)^{i+1}} - 1 - L_{u}(\phi).$$

\end{cor}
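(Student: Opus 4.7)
The plan is to combine Theorem~\ref{main} with the combinatorial identity of Corollary~\ref{comb2}. First I would apply Theorem~\ref{main} twice, once with $G = A_n$ and once with $G = S_n$, to rewrite both generating series as
\[
\sum_{n=0}^{\infty} L_u(\phi_{X^n}|_{H^\bl(X^n)^{G_n}})\, t^n \;=\; Z_{G_\bl}\bigl(\bs L(\phi),\, t\bigr),
\]
where $\bs L(\phi) := (L_u(\phi), L_{u^2}(\phi^2), \dots)$ and $G_\bl$ is either $A_\bl$ or $S_\bl$. This reduces the problem to substituting $x_r = L_{u^r}(\phi^r) \in k\llb u\rrb$ into the purely combinatorial identity of Corollary~\ref{comb2}.

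Carrying out this substitution in Corollary~\ref{comb2} yields the first displayed identity directly: the two $Z_{S_\bl}$-terms become the $S_n$-generating series and its $t \mapsto -t$ substitution, and $-1 - x_1$ becomes $-1 - L_u(\phi)$. The substitution is legitimate because $Z_{S_\bl}(\bs x, -t)$ has constant $t$-coefficient equal to $1$, so its inverse is a well-defined element of $k\llb u\rrb\llb t\rrb$ after substitution.

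For the second identity, I would invoke Corollary~\ref{S_{n}} (applicable since $\dim_k H^\bl(X) < \infty$) to replace the $S_n$-generating series with the closed-form product $\prod_{i=0}^{2d} \det(\id_{H^i(X)} - \phi_i u^i t)^{(-1)^{i+1}}$, which is the first product in the target. Substituting $-t$ for $t$ converts this into $\prod_i \det(\id_{H^i(X)} + \phi_i u^i t)^{(-1)^{i+1}}$; negating exponents produces its reciprocal $\prod_i \det(\id_{H^i(X)} + \phi_i u^i t)^{(-1)^i}$, which is exactly $\prod_i \bigl(\det(\id_{H^i(X)} + \phi_i u^i t)^{-1}\bigr)^{(-1)^{i+1}}$, matching the second product in the target. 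Assembling these substitutions into the first identity produces the desired closed form.

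The whole argument is essentially formal power series bookkeeping once Theorem~\ref{main}, Corollary~\ref{S_{n}}, and Corollary~\ref{comb2} are granted, so I do not foresee a substantive obstacle. The only point worth spelling out is the well-definedness of the reciprocal $1/Z_{S_\bl}(\bs x, -t)$ under the substitution $x_r \leftarrow L_{u^r}(\phi^r)$, and this is immediate since the constant $t$-coefficient of $Z_{S_\bl}(\bs x, -t)$ equals $1$ regardless of what power series in $u$ are substituted for the $x_r$.
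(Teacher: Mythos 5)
Your proposal is correct and is essentially the paper's own argument: the paper proves this corollary simply by citing Lemma \ref{comb1} and Corollary \ref{comb2} (together with Theorem \ref{main} and Corollary \ref{S_{n}}), and your write-up just spells out that substitution and the sign/exponent bookkeeping in detail, including the correct observation that invertibility of $Z_{S_{\bl}}(\bs{x},-t)$ under the substitution follows from its constant $t$-coefficient being $1$.
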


\

\

\hspace{3mm} Just as Corollary \ref{S_{n}} implied Theorem \ref{showcase}, Corollary \ref{A_{n}} implies the following concrete theorem:

\begin{thm}\label{showcase2} Let $X$ be either a compact complex manifold of dimension $d$ or a projective variety of dimension $d$ over a finite field $\bF_{q}$. Then for any endomorphism $F$ on $X$, we have

$$\sum_{n=0}^{\infty}L_{u}(\Alt^{n}(F)^{*}) t^{n} = \prod_{i=0}^{2d} \left( \frac{1}{\det(\id_{H^{i}(X)} - F^{*}_{i}u^{i}t)} \right)^{(-1)^{i}} +  \prod_{i=0}^{2d} \left( \frac{1}{\det(\id_{H^{i}(X)} + F^{*}_{i}u^{i}t)} \right)^{(-1)^{i+1}} - 1 - L_{u}(F^{*}),$$

\

where we used the same notations as in Theorem \ref{showcase} except $\Alt^{n}(F)$, the endomorphism on the $n$-th alternating power $\Alt^{n}(X) = X^{n}/A_{n}$ of $X$ induced by $F$.
\end{thm}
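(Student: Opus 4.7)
The plan is to derive Theorem \ref{showcase2} as a direct consequence of Corollary \ref{A_{n}}, in exactly the same way that Theorem \ref{showcase} follows from Corollary \ref{S_{n}}. The two ingredients needed are (i) a cohomological identification $H^{\bl}(\Alt^{n}(X)) \simeq H^{\bl}(X^{n})^{A_{n}}$ that is compatible with the endomorphism $\Alt^{n}(F)$, and (ii) verification that the relevant cohomology theory satisfies Axioms 1--3 of Section \ref{setup} and that the characteristic-of-$k$ hypothesis in Theorem \ref{main} is met for $G = A_{n}$.

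For step (i), I would argue that the quotient map $X^{n} \to X^{n}/A_{n} = \Alt^{n}(X)$ induces an isomorphism $H^{\bl}(\Alt^{n}(X)) \simeq H^{\bl}(X^{n})^{A_{n}}$ in both settings. In the singular setting, this is the analogue for $A_{n}$ of the statement used for $S_{n}$ in \cite{Mac2}; the same proof (averaging over the group action, which is invertible since $|A_{n}|$ is invertible in $\bQ$) applies verbatim. In the $\ell$-adic setting, Proposition 3.2.1 of \cite{HN} gives the corresponding isomorphism $H^{\bl}_{\et}(X^{n}/G, \bQ_{\ell}) \simeq H^{\bl}_{\et}(X^{n}, \bQ_{\ell})^{G}$ for any subgroup $G \leqs S_{n}$ provided $\ell \nmid |G|$, so we need $\ell > n$ (or at least $\ell \nmid |A_{n}| = n!/2$) for the coefficient of $t^{n}$. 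Since each coefficient on both sides of the claimed identity is individually independent of the choice of $\ell$, this pointwise choice is harmless for the generating-function statement. Next, I would note that because $F^{n} : X^{n} \to X^{n}$ commutes with the $A_{n}$-action by coordinate permutation, it descends to the endomorphism $\Alt^{n}(F)$, and hence the isomorphism above intertwines $\Alt^{n}(F)^{*}$ with the restriction of $F^{*}_{X^{n}} = (F^{*})^{\otimes n}$ to $H^{\bl}(X^{n})^{A_{n}}$; consequently $L_{u}(\Alt^{n}(F)^{*}) = L_{u}(F^{*}_{X^{n}}|_{H^{\bl}(X^{n})^{A_{n}}})$.

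For step (ii), I would invoke the verifications already implicitly used in the proof of Theorem \ref{showcase}: in the singular setting, $H^{\bl}(-, \bQ)$ satisfies the graded-commutative cup product, the K\"unneth formula, and finite-dimensionality for compact manifolds; in the $\ell$-adic setting, the same holds for $H^{\bl}_{\et}(-_{/\ol{\bF_{q}}}, \bQ_{\ell})$ on projective varieties (\cite{Mil}, VI). The characteristic of $\bQ$ or $\bQ_{\ell}$ does not divide $|A_{n}|$ as long as $\ell > n$, so the hypothesis of Corollary \ref{A_{n}} is satisfied. Applying Corollary \ref{A_{n}} with $\phi = F^{*}$ then yields precisely the desired identity, after identifying the left-hand side with $\sum_{n \geq 0} L_{u}(\Alt^{n}(F)^{*}) t^{n}$ via step (i).

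The only real subtlety, and hence the closest thing to an obstacle, is the bookkeeping around the $\ell$-adic choice: for each fixed $n$ one must pick $\ell > n$ in order to use the invariant-subspace identification, but the statement of Theorem \ref{showcase2} is a formal identity in $\bQ_{\ell}\llb u, t\rrb$. I would resolve this by observing that for each $n$ the coefficient of $t^{n}$ on both sides is a polynomial expression in $u$ whose coefficients are traces of $F^{*}_{i}$ on $H^{i}(X)$ and on $H^{\bl}(X^{n})^{A_{n}}$; for each such fixed $n$ the identity of these two scalars is what Corollary \ref{A_{n}} produces, so the generating-function identity holds term by term and no single $\ell$ need be chosen uniformly. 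All other steps are routine substitutions and require no further work beyond what Corollary \ref{A_{n}} already provides.
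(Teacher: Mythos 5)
Your proposal is correct and follows exactly the paper's route: the paper likewise obtains Theorem \ref{showcase2} from Corollary \ref{A_{n}} via the identification $H^{\bl}(X^{n}/A_{n}) \simeq H^{\bl}(X^{n})^{A_{n}}$ (from \cite{Mac2} and Proposition 3.2.1 of \cite{HN}), in the same way Theorem \ref{showcase} follows from Corollary \ref{S_{n}}. Your explicit treatment of the $\ell$-adic coefficient-by-coefficient choice of $\ell$ is a welcome elaboration of a point the paper only addresses in a remark.
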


\

\hspace{3mm} Theorem \ref{showcase2} is interesting in its own right. For instance, in the singular setting, if we take $F = \id_{X}$, we have the following identity that computes the singular Betti numbers of $\Alt^{n}(X)$ in terms of those of $X$:

$$\sum_{n=0}^{\infty}\chi_{u}(\Alt^{n}(X)) t^{n} = \prod_{i=0}^{2d} \left( \frac{1}{1 - u^{i}t} \right)^{(-1)^{i}h^{i}(X)} +  \prod_{i=0}^{2d} \left( \frac{1}{1 + u^{i}t} \right)^{(-1)^{i+1}h^{i}(X)} - 1 - \chi_{u}(X).$$

\

In particular, taking $u = 1$, we have the following formula for the Euler characteristics:

$$\sum_{n=0}^{\infty}\chi(\Alt^{n}(X)) t^{n} = \prod_{i=0}^{2d} \left( \frac{1}{1 - t} \right)^{\chi(X)} +  \prod_{i=0}^{2d} \left( \frac{1}{1 + t} \right)^{-\chi(X)} - 1 - \chi(X).$$

\

If $X$ is a smooth projective variety over $\bC$, then we also get the alternating power analogue of Cheah's result:

\begin{align*}
\sum_{n=0}^{\infty} & \sum_{i \geq 0}\sum_{p+q=i}h^{p,q}(\Alt^{n}(X))x^{p}y^{q}(-u)^{i} t^{n} \\
&= \prod_{i = 0}^{2d} \prod_{p + q = i} \left( \frac{1}{1 - x^{p}y^{q}u^{i} t} \right)^{(-1)^{i}h^{p,q}(X)} + \prod_{i = 0}^{2d} \prod_{p + q = i} \left( \frac{1}{1 + x^{p}y^{q}u^{i} t} \right)^{(-1)^{i+1}h^{p,q}(X)} \\
&\hspace{3mm} - 1 - \sum_{i = 0}^{2d}\sum_{p+q=i}h^{p,q}(X)x^{p}y^{q}(-u)^{i} t.
\end{align*}

\

In particular, the right-hand side is rational in $t$. We can also obtain an alternating analogue of Kapranov's observation that

$$Z_{X}(t) = \sum_{n=0}^{\infty} |\Sym^{n}(X)(\bF_{q})|t^{n},$$

\

where $Z_{X}(t)$ is the zeta series of $X$. That is, taking $u = 1$ in the $l$-adic setting of Theorem \ref{showcase2}, we have

$$Z_{X}(t) + \frac{1}{Z_{X}(-t)} - 1 - |X(\bF_{q})|t = \sum_{n=0}^{\infty}|\Alt^{n}(X)(\bF_{q})|t^{n}.$$

\

\begin{rmk} In a collaboration with Yinan Nancy Wang, we started to question if this identity holds in the Grothendieck ring of varieties over a field, where taking $\bF_{q}$-point countings is replaced by taking the classes in the Grothendieck ring. The problem seems nontrivial even when $X$ is $\bA^{1}$ or $\bP^{1}$ over any field.
\end{rmk}

\

\hspace{3mm} The upshot of this section is that because Theorem \ref{main} is formulated in terms of cycle indices, we can use combinatorial knowledge about them to understand cohomological information about alternating powers. We believe that there are more sequences of subgroups $G_{n}$ of $S_{n}$ such that the generating function for certain cohomological information (e.g., singular Betti numbers, $\bF_{q}$-point counts, or Hodge numbers) of $X^{n}/G_{n}$ is rational. Namely, whenever the generating function for $Z_{G_{n}}(x_{1}, \dots, x_{n})$ has a formula that involves exponentiation, we should be able to get rationality for cohomological information of $X^{n}/G_{n}$ by applying Theorem \ref{main}.

\

\section{More on point counting over finite fields}\label{pointcount}

\hspace{3mm} Let $X$ be a projective variety over a finite field $\bF_{q}$, and consider any subgroup $G \leqs S_{n}$ acting on $X^{n}$ by permuting coordinates. An immediate consequence of Theorem \ref{main} in the $l$-adic setting, for a prime $l$ not dividing $q$ nor $|G|$, by taking $u = 1$ and applying the Grothendieck-Lefschetz trace formula is that

\begin{align*}
|(X^{n}/G)(\bF_{q})| &= Z_{G}(|X(\bF_{q})|, |X(\bF_{q^{2}})|, \dots, |X(\bF_{q^{n}})|) \\
&= \frac{1}{|G|}\sum_{g \in G}|X(\bF_{q})|^{m_{1}(g)}|X(\bF_{q^{2}})|^{m_{2}(g)} \cdots |X(\bF_{q^{n}})|^{m_{n}(g)}.
\end{align*}

\

\hspace{3mm} It turns out that the formula even holds when $X$ is a quasi-projective variety over $\bF_{q}$ by using Theorem \ref{main} with the compactly supported $l$-adic \'etale cohomology, noting that all the results we use for the $l$-adic \'etale cohomology when $X$ is projective over $\bF_{q}$ generalize to the compactly supported $l$-adic \'etale cohomology when $X$ is quasi-projective over $\bF_{q}$ as long as $l \nmid q, |G|$. In particular, taking $X = \bA^{1}$ over $\bF_{q}$, we have

$$|(\bA^{n}/G)(\bF_{q})| = q^{n}.$$

\

When $G = A_{n}$ and $q$ is odd, we have

$$\bA^{n}/A_{n} \simeq \mathrm{Spec}\left( \frac{\bF_{q}[t_{1}, \dots, t_{n}]}{(y^{2} - \Delta_{n}(t_{1}, \dots, t_{n}))} \right),$$

\

where $\Delta_{n}(t_{1}, \dots, t_{n})$ is the discriminant of the monic polynomial

$$x^{n} + t_{1}x^{n-1} + \cdots + t_{n-1}x + t_{n}.$$

\

Thus, we see that for $n \geq 2$, the polynomial function $\Delta_{n} : \bF_{q}^{n} \rightarrow \bF_{q}$ given by the discriminant satisfies

$$|\Delta_{n}^{-1}(\{\text{quadratic residues in } \bF_{q}^{\times}\})| = |\Delta_{n}^{-1}(\{\text{quadratic non-residues in } \bF_{q}^{\times}\})|,$$

\

because $|\Delta_{n}^{-1}(0)| = q^{n-1}$, as there are precisely $q^{n} - q^{n-1}$ degree $n$ monic square-free polynomials in $\bF_{q}[x]$. The above equality was also observed by Chan, Kwon, and Seaman using more direct computations (Corollary 3.3 of \cite{CKS}).

\

\section{Further directions}

\hspace{3mm} Yinan Nancy Wang conjectured that the author's use of Theorem \ref{main} to restore Cheah's result about the Hodge numbers of $X^{n}/G$ can vastly generalize, which is the content of a joint work in progress. For instance, given any endomorphism on a smooth projective variety $X$ over $\bC$, it is believable that the methods in this paper will allow us to compute the Lefschetz series of the induced map on the cohomology of the sheaf of holomorphic $p$-forms on $X^{n}/G$. Since the quotient $X^{n}/G$ is not smooth in general, discussing holomorphic forms gets more technical. Nevertheless, we conjecture that in general, one can ``unwind'' the problem of taking intersections in $X^{n}/G$ into taking intersections in $X$. It will be also interesting to investigate if there is an analogous story to this paper for other finite group quotients.

\

\newpage

\end{document}